\begin{document}

\newtheorem{thm}{Theorem}
\newtheorem{prop}[thm]{Proposition}
\newtheorem{llama}[thm]{Lemma}
\newtheorem{sheep}[thm]{Corollary}
\newtheorem{deff}[thm]{Definition}
\newtheorem{fact}[thm]{Fact}
\newtheorem{example}[thm]{Example}
\newtheorem{slogan}[thm]{Slogan}
\newtheorem{remark}[thm]{Remark}
\newtheorem{quest}[thm]{Question}
\newtheorem{zample}[thm]{Example}

\newcommand{\sthat}{\hspace{.1cm}| \hspace{.1cm}}
\newcommand{\id}{\operatorname{id} }
\newcommand{\acl}{\operatorname{acl}}
\newcommand{\dcl}{\operatorname{dcl}}
\newcommand{\irr}{\operatorname{irr}}
\newcommand{\aut}{\operatorname{Aut}}
\newcommand{\fix}{\operatorname{Fix}}

\newcommand{\oo}{\mathcal{O}}
\newcommand{\aaa}{\mathcal{A}}
\newcommand{\mm}{\mathcal{M}}
\newcommand{\curg}{\mathcal{G}}
\newcommand{\bbf}{\mathbb{F}}
\newcommand{\A}{\mathbb{A}}
\newcommand{\R}{\mathbb{R}}
\newcommand{\Q}{\mathbb{Q}}
\newcommand{\C}{\mathbb{C}}
\newcommand{\cc}{\mathcal{C}}
\newcommand{\dd}{\mathcal{D}}
\newcommand{\N}{\mathbb{N}}
\newcommand{\Z}{\mathbb{Z}}
\newcommand{\cF}{\mathcal F}
\newcommand{\cB}{\mathcal B}
\newcommand{\cU}{\mathcal U}
\newcommand{\cV}{\mathcal V}
\newcommand{\cG}{\mathcal G}
\newcommand{\cD}{\mathcal D}
\newcommand{\curly}{\mathcal{C}}
\newcommand{\durly}{\mathcal{D}}
\newcommand{\fff}{\mathcal{F}}
\newcommand{\GGG}{\mathcal{G}}
\newcommand{\calc}{\mathcal{C}}

\newcommand{\alice}[1]{{\color{blue} \sf $\clubsuit\clubsuit\clubsuit$ Alice: [#1]}}
\newcommand{\ramin}[1]{{\color{red}\sf $\clubsuit\clubsuit\clubsuit$ Ramin: [#1]}}

\newcommand{\Fmodtor}{F^\times / \mu(F)}

\title[Hereditarily Irreducible Polynomials]{Multiplicative groups of fields and hereditarily irreducible polynomials}
\author{Alice Medvedev}
\address{A. M.: Department of Mathematics, The City College of New York, NAC 8/133, Convent Ave at 138th Street
New York, NY 10031}
\email{amedvedev@ccny.cuny.edu}
\thanks{The first author is partially supported by the Simons Foundation Collaborative Grant (Award number 317672), by NSF DMS-1500976, and by CUNY CIRG Project number 2248.
The second author is partially supported by the Simons Foundation Collaborative Grant (Award number 245977) and by National Science Foundation Research Training Grant Award number DMS-1246844.}
\author{Ramin Takloo-Bighash}
\address{R. T.-B.: Department of Mathematics, Statistics, and Computer Science, University of Illinois at Chicago, 851 S Morgan St (M/C 249), Chicago, IL 60607}
\email{rtakloo@uic.edu}
\begin{abstract}
In this paper we explore the concept of {\em good heredity} for fields from a group theoretic perspective. Extending results from \cite{alice}, we show that several natural families of fields are of good heredity, and some others are not. We also construct several examples to show that various wishful thinking expectations are not true.
\end{abstract}

\maketitle

\tableofcontents

\section{Introduction}

In this paper we investigate a few properties of fields closely related to the freeness of their multiplicative groups. The first author stumbled upon one of them in her investigation of divisibility of quasiendomorphisms of abelian varieties in \cite{alice}. This property of a field $F$, which we call \emph{good heredity}, deals with (ir)reducibility of $P(x^n)$ for a polynomial $P(x)$ over $F$, as $n$ varies,
The other two are properties of the multiplicative group of the field, \emph{probably} slightly weaker than freeness. We were surprised to discover that the two are not equivalent to each other, nor to good heredity (see Examples \ref{one-converse-fail} and \ref{converse-fail2} and Section 6.2). One advantage of our good heredity over all these related properties of the multiplicative group is that good heredity passes to finite extensions of fields (see Theorem \ref{gher-finext}), while the various properties of the multiplicative group do not (see example in \S \ref{rottenroots} and \S \ref{freemodtor-zample}).

\begin{deff}
 For a field $F$,  \begin{itemize}
 \item let $F^\times$ denote the multiplicative group of $F$, an abelian group;
 \item let $\mu(F)$ denote the group of roots of unity on $F$, i.e. the torsion subgroup of $F^\times$;
 \item and let $\Fmodtor$ denote their quotient, a torsion-free abelian group.
 \end{itemize}
\end{deff}

May \cite{may-1972} shows that any locally cyclic abelian group can show up as a direct summand of $F^\times$.
May \cite{may-1980} proves that for many interesting fields, $\Fmodtor$ is a free abelian group; and constructs an example showing that this property (freeness of $\Fmodtor$) does not pass to finite extensions, even in the very tame setting of algebraic extensions of $\mathbb{Q}$.

\begin{deff} \label{introdef}
An abelian group $G$ is \emph{rootless} if for any non-torsion element $a \in G$, the divisible hull inside $G$ of the subgroup of $G$ generated by $a$ is free.
We call $G$ \emph{free modtor} if the quotient of $G$ by its torsion subgroup is free Abelian.

A field $F$  is \emph{rootless} if the group $F^\times$ is rootless.

A field $F$  is \emph{rootless modtor} if the group $\Fmodtor$ is rootless.
\end{deff}

It is easy to see that if $\Fmodtor$ is free, then $F$ is rootless modtor; and if $F$ is rootless modtor, then $F$ is rootless. We show that the converses of both statements are false with Example Sections \ref{modtormodtor} and \ref{rootless-notrootless}.

We show that, like freeness of $\Fmodtor$, these two properties of fields do not pass to finite extensions (see Example 6.1). However, they shed light on the third, more complicated property we call ``good heredity'' (see Definition \ref{goodfieldef}), which does pass up to finite extensions (Theorem \ref{gher-finext}). This good heredity was the original motivation for this investigation. We show that good heredity of $F$ implies that every finite extension of $F$ is rootless (Theorem \ref{gher-finext} and Proposition \ref{shepherding}), and is implied by every finite extension of $F$ being rootless modtor (Proposition \ref{shepherding}). We do not know whether converses of these hold; if the following is true, all three are equivalent.

\begin{quest} Suppose that every finite extension of a field $F$ is rootless; does it follow that $F$ is rootless modtor?\\ In particular, does the rootless but not rootless modtor field constructed in Section 6.2 have a finite extension that is not rootless?
\end{quest}


In \cite{alice}, the first author needs to understand finite-to-finite group correspondences (i.e. quasiendomorphisms) of cartesian powers of an elliptic curve $E$. These can be encoded by matrices over $F$, the field of fractions of the ring of endomorphisms of $E$. Good heredity of the number field $F$ is used to analyze the characteristic polynomials of these matrices. While the model-theoretic goals of \cite{alice} are beyond the scope of this paper, they do suggest a desirable generalization, from elliptic curves to arbitrary simple abelian varieties.

\begin{quest}
Do quasiendomorphism division rings of higher dimensional abelian varieties have good heredity? and what does that even mean when multiplication is not commutative?\end{quest}

This paper includes a few model-theoretic side comments for the initiated, without defining or explaining any of the terminology; rest assured that none of these side comments are used in the main flow of logic.


Our results go well beyond the model-theoretic motivations of \cite{alice}.
We show that the following fields have good heredity: \begin{itemize}
\item $\mathbb{F}_p^{alg}$ for any prime $p$;
\item any finitely generated field (Corollary \ref{fingensheep});
\item the maximal abelian extension of any number field (Corollary \ref{maysheep}, leveraging May's results from \cite{may-1980});
\item any extension of $\mathbb{Q}$ by a set of algebraic elements of bounded degree (Corollary \ref{maysheep}, leveraging May's results from \cite{may-1980});
\item any subfield of a field of good heredity (Remark \ref{whole-field-ezis}); and
\item any finitely generated field extension of a field of good heredity (Proposition \ref{goodfields}).
\end{itemize}

 All of these except for $\mathbb{F}_p^{alg}$ are very far from being algebraically closed. Indeed, the multiplicative group of an algebraically closed field is divisible, so with the exception of $\mathbb{F}_p^{alg}$ where all elements are roots of unity, no algebraically closed field is rootless, so none have good heredity. More generally, we show that no local field is rootless, so none of those have good heredity (Corollary \ref{local-hensel}).

\begin{quest}
Are there are any pseudofinite fields of good heredity? More generally, any PAC fields of good heredity? Morally, how far does a field have to be from ACF to have a chance of good heredity?
\end{quest}

The last section of this paper contains nasty counterexamples to two things we tried very hard to prove: that rootless and rootless modtor are the same, and that this property passes to finite field extensions.


\

{\em Terminology Warning 1.}  Beware that the term ``hereditarily irreducible polynomial'' has been used to mean something else (though related) in \cite{ruschitver, abhyrub, angmu}. In these references a polynomial $P(y_1, \dots, y_n)$ is called hereditarily irreducible, if for all non-constant single variable polynomials $f_1, f_n$ the polynomial $P(f_1(y_1), \dots, f_n(y_n))$ is irreducible. This is a far more restrictive condition, and much harder to check for specific polynomials, or classes thereof. It is, for example,  an already non-trivial result from \cite{ruschitver} that if $f(x)$ is any square free polynomial of degree exceeding one, then the two variable polynomial $P(x, y) = y f(x) + 1$ is hereditarily irreducible.

\

{\em Terminology Warning 2}: In this article \emph{roots} refers to roots of elements of a field, e.g. roots of unity, or the second root of $2$. When talking about polynomial, we will use \emph{zero} to mean the root of the polynomial.

\section{polynomials}

\begin{deff}
 A \emph{hereditary factor} of a polynomial $P \in F[x]$ over a field $F$ is any factor of $P(x^n)$ for some $n \in \mathbb{N}$.

 A polynomial $P \in F[x]$ over a field $F$ is \emph{hereditarily irreducible over $F$} if for every $n \in \mathbb{N}$, the polynomial $P(x^n)$ is irreducible over $F$.

 A polynomial $P \in F[x]$ has \emph{good heredity} if $P(x^n)$ factors into hereditarily irreducible factors over $F$, for some positive integer $n$.
\end{deff}

One interesting question, intimately related to the arithmetic of the field $F$, is the classification of hereditarily irreducible polynomials in $F[x]$. The following question seems like an interesting question:
\begin{quest}
Fix a field $F$. What polynomials over $F$ are hereditarily irreducible over $F$?
\end{quest}

\begin{remark}
Hereditarily irreducible polynomials are in particular irreducible. The inverse is clearly false: polynomials $x$ and $x-1$ are not hereditarily irreducible over any field;  more generally, no polynomial whose zeroes are roots of unity is hereditarily irreducible.
\end{remark}

It is not clear what criteria one can formulate to guarantee a polynomial be hereditarily irreducible, but the following is an interesting example.

\begin{llama} If $R$ is a UFD, Eisenstein polynomials are hereditarily irreducible over the fraction field of $R$.
\end{llama}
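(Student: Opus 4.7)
The plan is essentially a one-line observation: Eisenstein-ness is preserved under the substitution $x \mapsto x^m$. Write $P(x) = \sum_{i=0}^n a_i x^i$ with $p \nmid a_n$, $p \mid a_i$ for $0 \le i < n$, and $p^2 \nmid a_0$, for some prime $p$ of the UFD $R$. Then
\[
P(x^m) = a_n x^{mn} + a_{n-1} x^{m(n-1)} + \cdots + a_1 x^m + a_0.
\]
Viewed as a polynomial in $x$ of degree $mn$, its leading coefficient is $a_n$ (not divisible by $p$); every other coefficient is either zero or equal to some $a_i$ with $i < n$ (hence divisible by $p$); and its constant term is still $a_0$, so not divisible by $p^2$. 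Thus $P(x^m)$ is itself an Eisenstein polynomial at $p$.

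The conclusion then follows by applying Eisenstein's criterion to $P(x^m)$. Since $R$ is a UFD with fraction field $F$, the standard form of Eisenstein's criterion gives irreducibility of $P(x^m)$ in $F[x]$. As $m$ was arbitrary, $P$ is hereditarily irreducible over $F$ by definition.

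There is essentially no obstacle; the only thing one might worry about is whether Eisenstein's criterion is available in the UFD setting rather than just for $R = \mathbb{Z}$, but it is: the usual Gauss-lemma-based proof carries over verbatim, and this is the form of the criterion that the statement of the lemma implicitly invokes.
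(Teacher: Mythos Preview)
Your proof is correct and follows essentially the same approach as the paper: both observe that if $P$ is Eisenstein at a prime $p$, then $P(x^m)$ is again Eisenstein at $p$, and then invoke the Eisenstein criterion together with Gauss's Lemma to conclude irreducibility over the fraction field of the UFD $R$.
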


\begin{proof} Let $R$ be an integral domain, and $P \in R[x]$ an Einsenstein polynomial with respect to a prime ideal $p$. Then for any $n \in \N$, $P(x^n)$ is Eisenstein with respect to $p$. If $R$ is also a UFD, then Eisenstein polynomials are irreducible over $R$ and, by Gauss's Lemma, over its field of fractions. \end{proof}

It can be enlightening to arrange hereditary factors of a polynomial into a tree. To avoid spurious branching due to constant factors, we require polynomials to be monic--an entirely harmless assumption over a field. We use this construction in the proof of Theorem \ref{alicelemma}.

\begin{deff}
For a monic irreducible polynomial $P \in F[x]$, let $T_0(P, F)$ be the following tree.
The nodes on the $n$-th level of $T_0(P, F)$ are the monic irreducible factors of $P(x^{n!})$ over $F$. 
The partial order of the tree is given by divisibility: a factor $R(x)$ of $P(x^{(n+1)!})$ lies above a factor $Q(x)$ of $P(x^{n!})$ whenever $R(x)$ divides $Q(x^{n+1})$. \end{deff}

If $P$ is not irreducible, the same construction yields a forest instead of a tree: $T_0(P,F)$ has several nodes on the lowest level.

To construct a similar object for a non-monic polynomial $P$, let $a$ be the leading coefficient of $P$, and let $\tilde{P}:= \frac{1}{a} P$, a monic polynomial. Finally, pick one branch of $T_0(\tilde{P}, F)$ and multiply all nodes on that branch by $a$. The arbitrary choice of the branch is the reason we stick to monic polynomials in the definition. Alternately, we could take the nodes to be equivalence classes of irreducible polynomials under the equivalence relation of ``non-zero multiple''.

Since $P(x)$ is irreducible, it has no repeated roots in $F^{alg}$. As long as $P(x) \neq x$, it follows that $P(x^n)$ also has no repeated roots in $F^{alg}$; and then $P(x^n)$ has no repeated factors in $F[x]$.
Thus, by unique factorization in $F[x]$, the product of all the nodes on $n$-th level of $T_0(P, F)$ is precisely $P(x^{n!})$. The polynomial $P(x) = x$ is ignored in this paper, as it is clearly not hereditarily irreducible; the particularly fastidious reader is invited to trace this exception through the rest of this paper to see that it does not break any of our proofs.

A factor $Q(x)$ of $P(x^{n!})$ is hereditarily irreducible if and only if there are no splits above it in this tree $T_0(P,F)$; the sole exception is $P = x$, for which every level of the tree contains nothing but $x$.

\begin{deff}
Let $T(P,F)$ be the tree obtained from $T_0(P,F)$ by trimming all nodes above hereditarily irreducible factors.
\end{deff}

\begin{llama} \label{fintreellama}
The polynomial $P \in F[x]$ has good heredity if and only if $T(P,F)$ is finite.
\end{llama}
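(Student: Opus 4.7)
The plan is to prove both directions by carefully tracking how hereditarily irreducible factors correspond to leaves of the trimmed tree. The ``if'' direction should be more delicate because the maximal level of $T(P,F)$ need not consist entirely of nodes that lie in $T(P,F)$; some nodes of $T_0(P,F)$ at that level may have been trimmed off because they sit above a hereditarily irreducible ancestor.

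My first step would be a small preparatory lemma: if $Q \in F[x]$ is hereditarily irreducible, then so is $Q(x^k)$ for every $k \geq 1$. This follows because $Q(x^k)^m) = Q(x^{km})$ is irreducible by hypothesis for every $m$, so the single descendant of $Q$ in $T_0$ at each later level is itself hereditarily irreducible. I would also note that good heredity witnessed by $n$ upgrades to good heredity witnessed by any multiple of $n$: if $P(x^n) = \prod Q_i(x)$ with $Q_i$ hereditarily irreducible, then $P(x^{nk}) = \prod Q_i(x^k)$ is again a factorization into hereditarily irreducibles. In particular, good heredity is witnessed by some $n = N!$.

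For the reverse implication, suppose $P$ has good heredity, witnessed by $n = N!$. Then every irreducible factor of $P(x^{N!})$ is hereditarily irreducible, so every node on level $N$ of $T_0(P,F)$ is hereditarily irreducible, and hence is a leaf of $T(P,F)$. Thus $T(P,F)$ has height at most $N$. Since each level of $T_0(P,F)$ has at most $N! \cdot \deg P$ nodes (one per monic irreducible factor of $P(x^{N!})$, whose degree bounds the count), $T(P,F)$ is finite.

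For the forward implication, assume $T(P,F)$ is finite, and let $N$ be its maximum level. I need to argue that $P(x^{N!})$ factors into hereditarily irreducibles, i.e.\ that every node $R$ on level $N$ of $T_0(P,F)$ (not just $T(P,F)$) is hereditarily irreducible. If $R$ lies in $T(P,F)$, it is a leaf by maximality of $N$, hence hereditarily irreducible by the definition of trimming. Otherwise $R$ was trimmed, so $R$ lies strictly above some hereditarily irreducible node $Q$ at some level $n < N$; but then $R$ is the unique $T_0$-descendant of $Q$ at level $N$, namely $Q(x^{N!/n!})$, which is hereditarily irreducible by the preparatory lemma. Either way, $R$ is hereditarily irreducible, so $P(x^{N!})$ factors into hereditarily irreducibles and $P$ has good heredity.

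The main conceptual obstacle, and the reason the trimmed tree $T(P,F)$ is not quite enough on its own, is the second case in the forward direction: the maximal level of $T(P,F)$ may be ``lower'' than the corresponding level of $T_0(P,F)$, because hereditarily irreducible ancestors on lower levels cut off long tails of the full tree. Handling this requires the observation that a hereditarily irreducible node still has a (hereditarily irreducible) unique shadow at every higher level of $T_0$, so that the product-of-all-nodes formula for $P(x^{N!})$ continues to witness good heredity.
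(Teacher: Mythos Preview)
Your proof is correct. The paper takes a different route: it invokes K\"onig's Lemma, observing that the finitely-branching tree $T(P,F)$ is infinite if and only if it has an infinite branch, and then unpacks such a branch as a chain of irreducible factors $Q_i$ of $P(x^{n_i})$ with $Q_{i+1}$ \emph{properly} dividing $Q_i(x^{n_{i+1}/n_i})$; the identification of this condition with the failure of good heredity is left implicit. You instead bound the height of $T(P,F)$ directly in each direction, avoiding K\"onig altogether; the preparatory observation that $Q$ hereditarily irreducible implies $Q(x^k)$ hereditarily irreducible is what drives both directions (and is implicit in the paper as well). Your handling of the case where a level-$N$ node of $T_0$ has already been trimmed is the one genuinely subtle point, and you deal with it correctly. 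One tiny slip: in the reverse direction you write that every level-$N$ node of $T_0(P,F)$ ``is a leaf of $T(P,F)$'', but such a node may itself have been trimmed and hence not lie in $T(P,F)$ at all; this does not affect the conclusion that $T(P,F)$ has height at most $N$. What each approach buys: your version is self-contained and fully explicit; the paper's version is terser and, more to the point, sets up the infinite-branch description that is reused verbatim in the proofs of Lemma~\ref{roothered}, Theorem~\ref{alicelemma}, and Theorem~\ref{gher-finext}.
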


\begin{proof}
This finitely-branching tree is infinite if and only if it has an infinite branch. That is, if there are integers $1= n_0 < n_1 < n_2 < n_3 < \ldots $ (all factorials, with $n_i$ dividing $n_{i+1}$ for each $i$) and irreducible factors $Q_i(x) \in F[x]$ of $P(x^{n_i})$ such that $Q_{i+1}(x)$ properly divides $Q_i(x^{\frac{n_{i+1}}{n_i}})$ for each $i$.
\end{proof}

\begin{llama} \label{treellama}
If the polynomial $P \in F[x]$ has good heredity, then for any $M \in \N$ there are only finitely many polynomials of degree less than or equal to $M$ in $T_0(P,F)$. \end{llama}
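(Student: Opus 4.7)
My approach is to split $T_0(P,F)$ into two pieces: the part already inside $T(P,F)$, which is finite by Lemma \ref{fintreellama}, and the part trimmed away, which sits in chains above the leaves of $T(P,F)$.

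First I would invoke Lemma \ref{fintreellama} to conclude that $T(P,F)$ has only finitely many nodes. In particular, $T(P,F)$ has only finitely many leaves; call them $Q_1,\ldots,Q_s$, where each $Q_i$ is a hereditarily irreducible factor of $P(x^{k_i!})$ for some level $k_i$, of degree $d_i$. Every node of $T_0(P,F)$ is either already a node of $T(P,F)$ or lies strictly above one of the $Q_i$, because the trimming from $T_0(P,F)$ to $T(P,F)$ removes exactly the proper ancestors of hereditarily irreducible factors. Since there are only finitely many nodes in $T(P,F)$, those contribute only finitely many polynomials of any degree, so it suffices to bound, for each $i$, the number of polynomials of degree $\leq M$ strictly above $Q_i$.

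The key observation is that above a hereditarily irreducible node the tree is a single chain. Indeed, the nodes of $T_0(P,F)$ on level $k_i+j$ lying above $Q_i$ are exactly the monic irreducible factors of $Q_i\bigl(x^{(k_i+j)!/k_i!}\bigr)$; but $Q_i$ is hereditarily irreducible, so this polynomial is itself irreducible. Hence on level $k_i+j$ there is precisely one node above $Q_i$, namely $Q_i\bigl(x^{(k_i+j)!/k_i!}\bigr)$, of degree $d_i\cdot (k_i+j)!/k_i!$. These degrees strictly increase to infinity with $j$, so only finitely many are $\leq M$.

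Summing the finite contributions from $T(P,F)$ and from the $s$ chains gives finitely many polynomials of degree $\leq M$ in $T_0(P,F)$. I do not anticipate a real obstacle here; the only thing that needs care is verifying that ``hereditarily irreducible'' applied at level $k_i$ really does yield irreducibility of $Q_i(x^m)$ for every $m$ (in particular for $m=(k_i+j)!/k_i!$), which is immediate from the definition.
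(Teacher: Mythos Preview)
Your proof is correct and follows essentially the same approach as the paper: both argue that above a hereditarily irreducible factor $Q$ the only nodes of $T_0(P,F)$ are $Q(x^m)$ for increasing $m$, so only finitely many have degree at most $M$; you have simply made explicit the appeal to Lemma~\ref{fintreellama} and the decomposition into $T(P,F)$ plus the chains above its leaves. One terminological slip: the nodes removed by trimming are the \emph{descendants} (nodes above, in the paper's root-at-bottom convention), not the ``proper ancestors,'' of the hereditarily irreducible factors---but your argument uses ``strictly above'' correctly and is unaffected.
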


\begin{proof} The only polynomials lying above a hereditarily irreducible factor $Q(x)$ of $P(x^{n!})$ in $T_0(P,F)$ are of the form $Q(x^m)$ for increasing $m$, so only finitely many of them will have degree less than $M$. \end{proof}

\section{(in)divisibility in the multiplicative group}

\subsection{Roots}

In this section we explore the connections between hereditary (ir)reducibility in $F[x]$ and (in)divisibility in the multiplicative group of $F$, that is, lack of roots. The problem of understanding the structure of the multiplicative group of an arbitrary field, or the classification of those Abelian groups with locally cyclic torsion subgroups which happen to be (isomorphic to) multiplicative groups of fields is very non-trivial, and despite non-trivial contributions by many mathematicians over the last half a century, there are many open questions in this area. For an old survey of results and review of the history of these results, see \cite{magicbook}, Chapter 4.
\begin{deff}
An element $f$ of a field $F$ is \emph{very rootless} if there is no $g \in F$ and $n \in \N$ with $n \geq 2$ and $f = g^n$.
An element $f$ of a field $F$ is \emph{very rootless modtor} if there is no root of unity $\zeta \in F$, element $g \in F$ and integer $n \geq 2$ such that $f = \zeta g^n$.
\end{deff}
For example in the field $\Q$, the element $-4$ is very rootless, but not very rootless modtor.  An element $f$ of the field $F$ is very rootless modtor if the image of $f$ in the quotient of the multiplicative group of $F$ by its torsion (i.e. by the group of roots of unity) has no proper roots.

\

Note that $a$ is very rootless (modtor) if and only if polynomials $x^n - a$ ($x^n - \zeta a$ for roots of unity $\zeta$) have no zeros in $F$ for $n \geq 2$.

\begin{llama} \label{modtorllama} If $F$ contains all roots of unity then an element $f \in F$ is rootless, resp. very rootless, if and only if it is rootless modtor, resp. very rootless modtor.\end{llama}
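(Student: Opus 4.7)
The plan is to prove both biconditionals by the same mechanism: since $F$ contains all roots of unity, any root of unity in $F$ is itself an $n$-th power of another root of unity in $F$, so a ``torsion twist'' that appears in a modtor-witness can be absorbed into an $n$-th power and thus eliminated.

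One direction of each biconditional requires no hypothesis on $F$. A witness $f = g^n$ to $f$ not being very rootless is also a witness $f = 1 \cdot g^n$ to $f$ not being very rootless modtor, so very rootless modtor implies very rootless; likewise any relation $g^n = f^k$ exhibiting failure of the rootless condition on $f$ is the special case $\zeta = 1$ of $g^n = \zeta f^k$, so rootless modtor implies rootless. This direction is pure unpacking of definitions.

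For the reverse direction, invoke the hypothesis: for every $\zeta \in \mu(F)$ and every $n \geq 1$ there exists $\eta \in \mu(F) \subseteq F$ with $\eta^n = \zeta$. If $f = \zeta g^n$ with $n \geq 2$, then $f = (\eta g)^n$ is a genuine $n$-th-power witness showing that $f$ is not very rootless. For the ``rootless'' half, a relation $g^n = \zeta f^k$ rewrites as $(g\eta^{-1})^n = f^k$, transferring a witness to failure of rootless modtor into a witness to failure of rootless \emph{with the same exponents $n$ and $k$}, and hence preserving the divisibility content of rootlessness. There is essentially no obstacle; the only thing to be careful about is to keep the exponents $n$ and $k$ unchanged under the translation so that the ``$n$ divides $k$'' content of rootlessness survives, and this is automatic because $\eta$ is absorbed entirely into $g$.
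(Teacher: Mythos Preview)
Your approach is essentially the same as the paper's: both absorb the root-of-unity twist $\zeta$ into the $n$-th power by choosing $\eta \in F$ with $\eta^n = \zeta$, which exists precisely because $F$ contains all roots of unity. The paper states this as a bijection $b \leftrightarrow \xi b$ between solutions of $x^n = a$ and solutions of $x^n = \zeta a$; you state it as rewriting a modtor-witness as an ordinary witness. These are the same argument.

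One minor point: your treatment of the ``rootless'' half invokes relations of the form $g^n = \zeta f^k$ and speaks of the ``$n$ divides $k$'' content of rootlessness. That does not match the paper's element-wise definition, in which $f$ is rootless when cofinitely many of the equations $x^n = f$ have no solution in $F$ --- there is no auxiliary exponent $k$. With the paper's definition the argument is even shorter: a solution of $x^n = \zeta f$ becomes a solution of $x^n = f$ (and conversely) via multiplication by $\eta^{\pm 1}$, with the \emph{same} $n$, so the set of $n$ for which a solution exists is literally unchanged. Your mechanism is correct; the bookkeeping with $k$ is simply unnecessary here.
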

\begin{proof}
If $F$ contains all roots of unity, then any root of unity $\zeta$ has an $n$-th root $\xi \in F$ for any $n$, so that
 $b \in F$ is a solution of $x^n = a$ if and only if $\xi b$ is a solution of $x^n = \zeta a$.
\end{proof}

\begin{prop} \label{rootless-linear}
 If an element $a$ is very rootless modtor, then $(x-a)$ is hereditarily irreducible over $F$.
 If $(x-a)$ is hereditarily irreducible over $F$, then $a$ is very rootless.
\end{prop}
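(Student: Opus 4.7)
The second direction is essentially immediate: the whole point of ``very rootless'' is that it rules out obvious linear factors of $x^n-a$. Explicitly, if $a=g^n$ for some $g\in F$ and some $n\ge 2$, then $g$ is a root of $x^n-a$, so $(x-g)$ is a linear factor of a polynomial of degree $n\ge 2$, contradicting hereditary irreducibility of $x-a$. So I would dispose of this direction in one sentence.

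For the forward direction, I would invoke the Vahlen--Capelli criterion for irreducibility of binomials: for $a\in F^\times$ and $n\ge 1$, the polynomial $x^n-a$ is irreducible over $F$ if and only if
\begin{itemize}
\item[(i)] $a\notin F^p$ for every prime $p$ dividing $n$; and
\item[(ii)] if $4\mid n$, then $-4a\notin F^4$.
\end{itemize}
Assuming $a$ is very rootless modtor, condition (i) is immediate: taking $\zeta=1$ in the definition, $a=g^p$ would exhibit $a$ as $\zeta g^p$ with $p\ge 2$. So the real content is checking (ii).

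For (ii), suppose $4\mid n$ and $-4a=g^4$ with $g\in F$. If $\operatorname{char}(F)\ne 2$, then $-4=(-1)\cdot 2^2$, so $a=-(g^2/2)^2$, i.e.\ $a=\zeta h^2$ with $\zeta=-1\in\mu(F)$ and $h=g^2/2\in F$; this contradicts very-rootlessness modtor since $2\ge 2$. If $\operatorname{char}(F)=2$, then $-4a=0$, and since very rootless modtor forces $a\ne 0$ (the element $0$ is trivially $\zeta\cdot 0^n$ for any $\zeta,n$), no such $g$ exists and (ii) holds vacuously.

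The main (mild) obstacle is the $4\mid n$ case, which is the classical Vahlen subtlety: one has to recognize that the identity $-1/4=-(1/2)^2$ converts a would-be Vahlen obstruction into precisely a ``root of unity times a square'' expression, which is exactly what very-rootlessness modtor forbids. Everything else is bookkeeping, and the characteristic-$2$ corner is handled by noting $-4a$ vanishes there. If the authors prefer to avoid Vahlen--Capelli, the same proof can be given by Kummer-theoretic analysis of $F(\sqrt[n]{a})/F$, but invoking the binomial irreducibility theorem is cleaner and matches the structure of the hypothesis exactly.
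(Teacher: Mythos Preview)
Your proof is correct in substance, though the characteristic-$2$ paragraph has a small slip: with your stated form of the criterion (``$-4a\notin F^4$''), the element $g=0$ \emph{does} satisfy $g^4=0=-4a$, so your sentence ``no such $g$ exists'' is literally false. The fix is to use the standard form of Vahlen--Capelli, namely ``$a\notin -4F^4$'' (equivalent to yours only in characteristic $\neq 2$); since $-4F^4=\{0\}$ in characteristic $2$ and very-rootlessness modtor forces $a\neq 0$, condition (ii) is then genuinely vacuous there.

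The paper takes a different, more self-contained route for the forward direction: rather than invoking Vahlen--Capelli, it argues directly via norms. Given a proper irreducible factor $Q$ of $x^n-a$ of degree $m<n$, the product $c$ of the zeros of $Q$ lies in $F$ (as $\pm$ the constant term) and satisfies $c^n=a^m$. Writing $k=\gcd(m,n)$, $m'=m/k$, $n'=n/k$, one gets $c^{n'}=\zeta a^{m'}$ for some $k$th root of unity $\zeta$, and then a B\'ezout identity $\alpha m'+\beta n'=1$ yields $a=\xi(c^\alpha a^\beta)^{n'}$ with $n'\ge 2$, contradicting very-rootlessness modtor. This is essentially the engine behind Vahlen--Capelli, done by hand; your approach packages the same content into a citation. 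Your version is shorter if the reader already knows the binomial criterion, while the paper's version is elementary and avoids any case split on $4\mid n$ or on the characteristic.
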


\begin{proof} If $Q(x)$ is an irreducible factor of $x^n -a$ of degree $m \lneq n$, then every zero $b$ of $Q$ has $b^n =a$, and the product $c$ of all $m$ of them is in $F$ and satisfies $c^n = a^m$. Let $k := \gcd (m,n)$ so that $(c^{n'})^k = (a^{m'})^k$, so $(c^{n'}) = \zeta (a^{m'})$ for some $k$th root of unity $\zeta$. Since $m \lneq n$, it must be that $n' \gneq 1$. Since $m'$ and $n'$ are relatively prime, there are integers $\alpha$ and $\beta$ such that $\alpha m' + \beta n' =1$. Now
 $$a = a^{\alpha m' + \beta n'} = a^{\alpha m'}a^{\beta n'} = \zeta^{-\alpha} (c^{n'})^\alpha a^{\beta n'}
 = \xi (c^\alpha a^{\beta})^{n'}$$
 makes $a$ a non-trivial power modulo torsion, so $a$ cannot be very rootless modtor.

If $a = c^n$, then $x-c$ divides $x^n - a$.
\end{proof}

\begin{zample} \label{one-converse-fail}
The converses of the two statements can fail.
On one hand, $x^n +4$ is irreducible over $\mathbb{Q}$ for all $n$, but $-4$ is not very rootless modtor.
On the other hand, $-2$ is very rootless in $F := \mathbb{Q}(\sqrt[4]{2})$, but $x^4 + 2$ factors over $F$ as
 $(x^2 + \alpha^2 + \alpha^3x)(x^2 + \alpha^2 - \alpha^3x)$ where $\alpha^4 = 2$.
\end{zample}

\begin{sheep} \label{withroots1}
If $F$ contains all roots of unity and $a \in F$, the following are equivalent.\begin{itemize}
\item $a$ is very rootless modtor.
\item $(x-a)$ is hereditarily irreducible.
\item $a$ is very rootless.
\end{itemize}
Also, none of those hold if $a$ is a root of unity.
\end{sheep}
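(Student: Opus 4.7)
The plan is to assemble the corollary from results already in hand, chasing implications around a short cycle rather than re-proving anything from scratch.

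First, I would observe that Proposition \ref{rootless-linear} already supplies two of the three implications needed for the equivalence, with no hypothesis on $F$ required: namely, ``very rootless modtor'' implies ``$(x-a)$ hereditarily irreducible'' implies ``very rootless''. So to close the cycle it suffices to show that, under the hypothesis that $F$ contains all roots of unity, ``very rootless'' implies ``very rootless modtor''. This is essentially Lemma \ref{modtorllama}, which was stated there for rootless/rootless modtor but whose proof applies verbatim to the very-rootless versions: given an identity $f = \zeta g^n$ witnessing failure of ``very rootless modtor'', one picks an $n$th root $\xi \in F$ of $\zeta$ (available because $F$ contains all roots of unity) and rewrites this as $f = (\xi g)^n$, contradicting ``very rootless''. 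I would either cite the lemma for this or re-run this one-line argument in place.

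For the final clause, suppose $a \in F$ is a root of unity. Then trivially $a = a \cdot 1^2$ exhibits $a$ as torsion times a nontrivial power, so $a$ is not very rootless modtor; by the equivalence just established, none of the three properties hold. (Alternatively, one sees directly that $a = \zeta^n$ for any $n \geq 2$ where $\zeta$ is an $n$th root of $a$ in $F$, so $a$ is not very rootless; and the factorization $x^n - a = \prod_{i=0}^{n-1}(x - \zeta \omega^i)$, with $\omega$ a primitive $n$th root of unity in $F$, shows that $(x-a)$ is not hereditarily irreducible.)

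There is no real obstacle here: the equivalence is just Proposition \ref{rootless-linear} plus the observation that the presence of all roots of unity erases the distinction between the modtor and non-modtor versions of ``very rootless''. The only thing worth being careful about is to make sure the ``very rootless iff very rootless modtor'' half of Lemma \ref{modtorllama} is actually what's being invoked (as opposed to the rootless/rootless modtor version), and to handle the degenerate case where $a = 0$ or $a = 1$ by noting that these are roots of unity and so fall under the final clause.
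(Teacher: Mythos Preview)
Your proposal is correct and matches the paper's approach exactly: the paper's proof is just the one-line observation that with all roots of unity present, ``very rootless'' and ``very rootless modtor'' coincide, which together with Proposition~\ref{rootless-linear} closes the cycle. Two tiny quibbles: Lemma~\ref{modtorllama} already explicitly includes the ``very rootless'' case in its statement, so you can cite it directly without the caveat; and $0$ is not a root of unity, so your closing parenthetical should not lump it in with $a=1$ (though the equivalence still holds vacuously at $a=0$ since all three properties fail there).
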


\begin{proof} If $F$ contains all roots of unity, then any root of unity has any root in $F$. Thus, very rootless and very rootless modtor are the same notion. \end{proof}

\subsection{Descending chains}

\begin{deff}
An element $a \in F$ that is not a root of unity is \emph{rootless} if cofinitely many of the equations $x^n = a$ have no solutions in $F$.

An element $a \in F$ that is not a root of unity is \emph{rootless modtor} if for cofinitely many $n$ the equation $x^n = \zeta a$ has solutions in $F$ for any root of unity $\zeta \in F$.
\end{deff}

\begin{remark} \label{rootsrk}
An element $f \in F$ is rootless if and ony if the set \\ $\{ g \in F : g^n = f\mbox{ for some } n \in \mathbb{N} \}$ of roots of $f$ is finite.

An element $f$ of the field $F$ is rootless modtor if the image of $f$ in the quotient $F^\times /\mu(F)$ of the multiplicative group of $F$ by its torsion  has only finitely many proper roots. Equivalently, the subgroup $R(f,F)$ of $(\mathbb{Q}, +)$ defined by
$$R(f,F) := \{ q \in \mathbb{Q} \sthat \bar{f}^q \in F/\mu \}$$
is cyclic. (Here, $\bar{f}$ is the image of $f$ in $F/\mu$.) Since $F^\times /\mu(F)$ is torsion-free, roots in it are well defined when they exist, so $\bar{f}^q$ is well-defined.
\end{remark}

\begin{llama} \label{roothered}
Fix a field $F$ and an element $a \in F$ that is not a root of unity.

If $a$ is rootless modtor, then $P(x) := x-a$ has good heredity.

If $P(x)$ has good heredity, then $a$ is rootless.
\end{llama}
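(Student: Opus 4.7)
The lemma has two implications.

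For the first ($a$ rootless modtor implies $x-a$ has good heredity), the plan is to reduce to Proposition \ref{rootless-linear}. By Remark \ref{rootsrk}, $R(a,F) \subseteq \mathbb{Q}$ is cyclic; write $R(a,F) = \frac{1}{N}\mathbb{Z}$. Choose $b \in F$ with $b^N = \zeta a$ for some root of unity $\zeta \in F$; by maximality of $N$, the class $\bar b$ in $F^\times/\mu(F)$ has no proper roots, so $b$ is very rootless modtor, and Proposition \ref{rootless-linear} gives that $(x-b)$ is hereditarily irreducible. I would then pick $n$ with $n! \geq N \cdot \operatorname{ord}(\zeta)$ and show every irreducible factor of $x^{n!}-a$ over $F$ is hereditarily irreducible. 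For any root $\alpha \in F^{alg}$ of such a factor, $(\alpha^{n!/N})^N = a = \zeta^{-1} b^N$, so $\alpha^{n!/N} = \rho b$ for some root of unity $\rho$ of order dividing $N \operatorname{ord}(\zeta)$. Using that the image of $b$ in $F(\alpha)^\times/\mu(F(\alpha))$ also has no proper roots, one then transfers the hereditary irreducibility of $(x-b)$ over $F$ to that of the minimum polynomial of $\alpha$ over $F$. The main subtlety is tracking the root of unity $\rho$, which may live in $F^{alg}$ but not $F$.

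For the second implication ($x-a$ has good heredity implies $a$ rootless), I argue the contrapositive: suppose $a$ is not rootless and produce an infinite branch of $T_0(P,F)$, contradicting Lemma \ref{fintreellama}. Let $S = \{n \geq 1 : \exists b \in F,\ b^n = a\}$. This $S$ is closed under divisors (trivial) and under least common multiples: given $b^n = c^m = a$ with $\gcd(n,m) = d$, $n = dn'$, $m = dm'$, and B\'ezout integers $u,v$ with $u m' + v n' = 1$, the element $e := b^u c^v \in F$ satisfies $e^{\operatorname{lcm}(n,m)} = a^{um' + vn'} = a$. Hence $S$ is the divisor set of some supernatural number; since $S$ is infinite, this supernatural number is not an integer and $S$ contains an infinite divisibility chain $n_1 \mid n_2 \mid \cdots$ with $n_i \to \infty$. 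Pick $b_i \in F$ with $b_i^{n_i} = a$ and, adjusting each $b_i$ by a root of unity in $F$, arrange $b_{i+1}^{n_{i+1}/n_i} = b_i$.

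I would then embed this tower into $T_0(P,F)$. For each factorial level $L$, let $k(L) = \max\{k : n_k \mid L!\}$; since $n_k \to \infty$, we have $k(L) \to \infty$, so the strict increase $k(L+1) > k(L)$ occurs infinitely often. Coherently choose $\alpha_L \in F^{alg}$ with $\alpha_L^{L!} = a$, $\alpha_{L+1}^{L+1} = \alpha_L$, and $\alpha_L^{L!/n_{k(L)}} = b_{k(L)}$, and let $Q_L$ be the minimum polynomial of $\alpha_L$ over $F$, a node of $T_0(P,F)$ at level $L$. Whenever $k(L+1) > k(L)$, the element $b_{k(L+1)} \in F \subseteq F(\alpha_L)$ satisfies $b_{k(L+1)}^{m_L} = b_{k(L)} = \alpha_L^{L!/n_{k(L)}}$ with $m_L = n_{k(L+1)}/n_{k(L)} \geq 2$, and a B\'ezout manipulation exhibits $\alpha_L$ as a genuine $p$-th power in $F(\alpha_L)$ for a suitable prime $p \mid L+1$; this forces $[F(\alpha_{L+1}):F(\alpha_L)] < L+1$, so $Q_{L+1}$ properly divides $Q_L(x^{L+1})$, making the branch $\{Q_L\}$ infinite. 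The main obstacle is ensuring the B\'ezout extraction yields an honest $p$-th root (not merely one up to a root of unity), which requires the incidental roots of unity to be themselves $p$-th powers in $F(\alpha_L)$---manageable by a coprimality argument on the relevant orders.
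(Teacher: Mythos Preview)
Your approach to the first implication has a genuine gap. You assert that ``the image of $b$ in $F(\alpha)^\times/\mu(F(\alpha))$ also has no proper roots,'' but this is false: since $\alpha^{n!/N} = \rho b$ with $\rho$ a root of unity and $n!/N \geq 2$, the element $\bar\alpha$ is itself a proper root of $\bar b$ in $F(\alpha)^\times/\mu(F(\alpha))$. Very-rootlessness modtor does not pass to finite extensions (this is exactly the phenomenon in \S\ref{rottenroots}), so you cannot transfer hereditary irreducibility of $(x-b)$ from $F$ to $F(\alpha)$ this way. The paper sidesteps the issue by arguing the contrapositive and never leaving $F$: given an infinite branch $(Q_i)$ in $T(P,F)$ with $\deg Q_i = k_i$, the product $a_i$ of the zeros of $Q_i$ lies in $F$ (it is $\pm$ the constant term) and satisfies $a_i^{n_i} = a^{k_i}$, so $k_i/n_i \in R(a,F)$. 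Proper divisibility $Q_{i+1} \mid Q_i(x^{n_{i+1}/n_i})$ forces $k_{i+1}/n_{i+1} < k_i/n_i$, giving infinitely many elements of $R(a,F)\cap(0,1]$; hence $R(a,F)$ is not cyclic.

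Your second implication is salvageable but overbuilt, and the loose ends you flag are real. First, ``adjusting each $b_i$ by a root of unity in $F$'' need not yield a coherent tower (the required root of unity may not lie in $F$); you would need an inverse-limit argument on the finite fibers instead. Second, the B\'ezout extraction of $\alpha_L$ as a genuine $p$th power with $p\mid L{+}1$ can fail for an arbitrary divisibility chain (try $n_0=1$, $n_1=8$ at $L=3$); it goes through only after you refine the chain so that each $n_{i+1}/n_i$ is prime. The paper's argument avoids all of this: if $P(x^n)=\prod_j P_j$ with each $P_j$ hereditarily irreducible, then for every $i$ the polynomial $x^n-\beta_i$ (where $\beta_i^{\ell_i}=a$) is a degree-$n$ factor of $P(x^{n\ell_i})$ and hence a node of some $T_0(P_j,F)$. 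Since $a$ is not a root of unity the $\beta_i$ are distinct, so infinitely many degree-$n$ nodes land in finitely many trees, contradicting Lemma~\ref{treellama}. No coherent towers, no B\'ezout, no roots of unity to track.
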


\begin{proof}
The contrapositive of the second claim is easiest to prove. Suppose that there is a sequence of natural numbers
$\ell_1 < \ell_2 < \ell_3 < \dots$, and elements $\beta_i \in F^\times$ with
$$ a = \beta_i^{\ell_i} $$ for each $i$.
Suppose that $P(x^n)$ is a product of hereditarily irreducible polynomials $P_i$ for some $n$. Next,
$$ P(x^{n \ell_i}) = x^{n \ell_i} - a = x^{n \ell_i} - \beta_i^{\ell_i} = (x^n- \beta_i) P_i(x).$$
Since $a$ is not a root of unity, all $\beta_i$ are distinct, and so $(x^n- \beta_i)$ are infinitely many distinct polynomials of the same degree in the finitely many trees $T_0(P_i,F)$, contradicting Lemma \ref{treellama} in at least one of the trees.

Now, for the contrapositive of the first claim,
Suppose that $(x- a)$ does not have good heredity over $F$, so there is an infinite branch in $T(P,F)$:\begin{enumerate}
\item integers $1= n_0 < n_1 < n_2 < n_3 < \ldots$ and
\item irreducible polynomials $Q_i(x) \in F[x]$ of degree $k_i$, such that:
\item\label{[*]} all $n_i$ are factorials, and $n_i$ divides $n_{i+1}$ for each $i$;
\item\label{[**]} $Q_i(x)$ divides $P(x^{n_i})$; and
\item\label{[***]} $Q_{i+1}(x)$ properly divides $Q_i(x^{\frac{n_{i+1}}{n_i}})$ for each $i$.
\end{enumerate}

From \eqref{[**]}, $Q_i(x)$ divides $(x^{n_i}- a)$, so the $n_i$th power of any zero of $Q_i$ is $a$.
Let $a_i$ be the product of the $k_i$ zeros of $Q_i$; then
 $$a_i^{n_i} = a^{k_i}.$$
This $a_i$ is in $F$ because $(-1)^{k_i}a_i$ is the constant coefficient of $Q_i$.
Let $b_i$ and $b$ be images of $a_i$ and $a$ in the quotient $F/\mu$ of the multiplicative group of $F$ by its torsion, i.e. roots of unity. Of course, we still have
 $$b_i^{n_i} = b^{k_i}.$$
In the terminology of Remark \ref{rootsrk}, we get $k_i/n_i \in R(a,F)$ for each $i$.
Since \eqref{[***]} implies that $\frac{k_{i+1}}{n_{i+1}} \lneq \frac{k_i}{n_i}$ for each $i$, this makes $R(a,F)$ not cyclic, and, therefore, $a$ not rootless modtor.
\end{proof}

Again, converses may fail.

\begin{zample} \label{converse-fail2}
Consider $F_n := \mathbb{Q}(\sqrt[2^n]{17})$ and let $a := -17$.
Note that, $a$ is even very rootless: as $F_n$ is embeddable in the reals, $a$ has no even-degree roots; and for any odd $p$, the degree of $\mathbb{Q}(\sqrt[p]{-17})$ over $\mathbb{Q}$ is odd, so $\mathbb{Q}(\sqrt[p]{-17})$ cannot be a subfield of $F_n$. Of course, $(-1)(-17) = 17$ is not rootless in $F$. We suspect that $x+17$ is hereditarily irreducible over $F_n$, but we do not know how to prove this.  \end{zample}

An analog of Corollary \ref{withroots1} holds: if $F$ has enough roots of unity in it, then $a$ is rootless if and only if it is rootless modtor, so also if and only if $x-a$ has good heredity.

\subsection{Higher-degree polymonials.}
We start this subsection with a remark.
\begin{remark} \label{easyrk}
A polynomial $P(x)$ has good heredity if and only if $P(x^n)$ has good heredity for some/all $n$.

By Unique Factorization in polynomial rings over fields, any product of hereditarily irreducible polynomials has good heredity.

It is easy to see that if $F \leq E$ and $P \in F[x]$ has good heredity over $E$, then $P$ also has good heredity over $F$. (See Section \ref{logicsec} below for an overly technical explanation of this.)

\end{remark}

The next result was the original motivation for this work, Lemma 4.15 of \cite{alice}. Much of its original proof has already been presented in bits and pieces earlier in this paper.

\begin{thm} \label{alicelemma}
Let $F$ be a field and let $P(x) \in F[x]$ be a separable irreducible polynomial whose zeros in $F^{alg}$ are not roots of unity. Suppose for every root $a \in F^{alg}$ of $P$, $a$ is rootless modtor in $F(a)$. Then $P$ has good heredity over $F$.\end{thm}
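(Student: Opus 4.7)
The plan is to prove the contrapositive. Assume $P$ does not have good heredity over $F$. By Lemma \ref{fintreellama} the tree $T(P, F)$ is infinite, and since it is finitely branching, König's lemma gives an infinite branch; as in the proof of Lemma \ref{fintreellama} this yields positive integers $1 = n_0 < n_1 < n_2 < \cdots$ with $n_i \mid n_{i+1}$ and monic irreducibles $Q_i \in F[x]$ such that $Q_i \mid P(x^{n_i})$ and $Q_{i+1}$ \emph{properly} divides $Q_i(x^{m_i})$, where $m_i := n_{i+1}/n_i$. The goal is to extract from this a single root $a$ of $P$ whose image in $F(a)^\times/\mu(F(a))$ admits an infinite, strictly descending set of $\mathbb{Q}$-powers, contradicting rootless modtor via Remark \ref{rootsrk}.

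First I make the choices of roots coherent. For each $i$ pick $c_i \in F^{alg}$ a root of $Q_i$ with $c_{i+1}^{m_i} = c_i$. Such coherent choices exist because raising to the $m_i$-th power commutes with $\mathrm{Gal}(F^{alg}/F)$, so the image of the roots of $Q_{i+1}$ inside the roots of $Q_i$ (nonempty, since $Q_{i+1} \mid Q_i(x^{m_i})$) is a Galois-invariant subset; irreducibility of $Q_i$ makes Galois transitive on its roots, forcing surjectivity of the power map. Setting $a := c_0$, which is a root of $P$ since $Q_0 = P$, coherence gives $c_i^{n_i} = a$ for every $i$. Now let $k_i := \deg Q_i = [F(c_i):F]$ and $e := [F(a):F] = \deg P$, so $[F(c_i):F(a)] = k_i/e$. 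Taking $\tilde b_i := N_{F(c_i)/F(a)}(c_i) \in F(a)$ and using multiplicativity of the norm together with $N_{L/K}(x) = x^{[L:K]}$ for $x \in K$,
\[
\tilde b_i^{n_i} = N_{F(c_i)/F(a)}(c_i^{n_i}) = N_{F(c_i)/F(a)}(a) = a^{k_i/e}.
\]
Passing to the torsion-free quotient $F(a)^\times/\mu(F(a))$, where $n_i$-th roots are unique when they exist, this means $k_i/(e n_i) \in R(a, F(a))$.

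The contradiction is now immediate. The \emph{properness} in ``$Q_{i+1}$ properly divides $Q_i(x^{m_i})$'' gives $k_{i+1} < m_i k_i$, hence $k_{i+1}/n_{i+1} < k_i/n_i$ strictly; and $[F(c_i):F(a)] \le n_i$ since $c_i$ satisfies $x^{n_i}-a$ over $F(a)$, so $k_i/(e n_i) \le 1$. Thus $k_i/(e n_i)$ is a strictly decreasing positive sequence inside $R(a, F(a))$. But every nontrivial cyclic subgroup of $(\mathbb{Q}, +)$ has the form $(1/N)\mathbb{Z}$, whose positive elements are bounded below by $1/N$, so it contains no infinite strictly decreasing positive sequence. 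Therefore $R(a, F(a))$ is not cyclic, so $a$ is not rootless modtor in $F(a)$, contradicting the hypothesis. The main subtlety is securing the coherent choice of the $c_i$: without it, the tree data only speaks of roots of $P$ collectively, and we cannot pin down one root $a$ for which to invoke the rootless-modtor hypothesis. The Galois-invariance observation is the concise fix; the rest is the norm identity and a one-line order argument in $(\mathbb{Q}, +)$.
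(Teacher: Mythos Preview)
Your proof is correct and follows the paper's strategy: obtain an infinite branch $(Q_i)$ in $T(P,F)$, fix a root $a$ of $P$, and produce a strictly decreasing sequence of positive rationals in $R(a,F(a))$ to contradict rootless modtor. The only cosmetic difference is that the paper routes through an infinite branch of $T(x-a,\,F(a))$ and then invokes Lemma~\ref{roothered}, whereas you inline that lemma's norm argument directly---your $\tilde b_i = N_{F(c_i)/F(a)}(c_i)$ plays exactly the role of the product-of-roots element $a_i$ in the proof of Lemma~\ref{roothered}, and your coherent choice of the $c_i$ is just a tidy way to pin down $a$.
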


\begin{proof}
 By Lemma \ref{fintreellama}, it suffices to show that the tree $T(P,F)$ is finite. Suppose towards contradiction that it is not. This infinite, finitely-branching tree $T$ must have an infinite chain: integers $1= n_0 < n_1 < n_2 < n_3 < \ldots $ (all factorials, with $n_i$ dividing $n_{i+1}$ for each $i$) and irreducible factors $Q_i(x) \in F[x]$ of $P(x^{n_i})$ such that $Q_{i+1}(x)$ properly divides $Q_i(x^{\frac{n_{i+1}}{n_i}})$ for each $i$.

 Since $P$ is irreducible, any two roots of $P$ in $F^{alg}$ are conjugate by an automorphism $\rho$ of $F^{alg}$ over $F$. Since $\rho$ fixes the coefficients of $Q_i$, each root of $P$ has the same number $k_i \leq n_i$ of $n_i$th roots that are also of roots of $Q_i$. Since $Q_{i+1}(x)$ properly divides $Q_i(x^{\frac{n_{i+1}}{n_i}})$ for each $i$, we must have $\frac{k_{i+1}}{n_{i+1}} \lneq \frac{k_i}{n_i}$ for each $i$.

 Fix a root $a \in F^{alg}$ of $P$, let $\tilde{F} := F(a)$, and let $\tilde{P}(x) := x-a$, an irreducible polynomial in $\tilde{F}[x]$. One of the hypotheses of this theorem is that $a$ is rootless modtor in $\tilde{F}$, and it follows by Proposition \ref{rootless-linear} that $\tilde{P}$ is hereditarily irreducible over $\tilde{F}$.
 The $k_i$ $n_i$th roots of $a$ which are also roots of $Q_i$ form a Galois orbit over $\tilde{F}$ and thus correspond to a factor $\tilde{Q}_i \in \tilde{F}[x]$ of $\tilde{P}(x^{n_i}) = x^{n_i} -a$, of degree $k_i$.
 Now these $\tilde{Q}_i$ form an infinite branch in $T(\tilde{P}, \tilde{F})$ contradicting hereditary irreducibility of $\tilde{P}$ over $\tilde{F}$ via Lemma \ref{fintreellama} again.
\end{proof}

\section{Fields of good heredity: Basic properties}

It is clear that a polynomial which is divisible by $x$ cannot be hereditarily irreducible.

\begin{deff} \label{goodfieldef}
A field $F$ \emph{has good heredity} if every polynomial $P \in F[x]$ none of whose roots in $F^{alg}$ are roots of unity or zero has good heredity over $F$.

A field $F$ is \emph{rootless} if any $f \in F$ that is not a root of unity is rootless in $F$.

A field $F$ is \emph{rootless modtor} if any $f \in F$ that is not a root of unity is rootless modtor in $F$.
\end{deff}

\begin{remark} \label{whole-field-ezis}
These three properties of fields (rootless, rootless modtor, and good heredity) pass to subfields.
\end{remark}

\begin{proof}
The failure of each of these properties passes up from subfields because it is witnessed by an infinite collection of elements of the field satisfying some equations and failing to satisfy others. The witnesses are still there in the bigger field, and the two fields agree on whether the equations are satisfied. \end{proof}

We have already shown the following.

\begin{prop} \label{shepherding}
If $F$ is rootless modtor, then all linear polynomials in $F$ have good heredity over $F$. If all linear polynomials in $F$ have good heredity over $F$, then $F$ is rootless.

If $F$ contains all roots of unity, or only finitely many roots of unity, then these three properties are equivalent.

If all finite extensions of $F$ are rootless modtor, then $F$ has good heredity.
\end{prop}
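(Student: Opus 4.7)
The plan is to handle the three clauses in turn; each is essentially a repackaging of a lemma already established.

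For the first clause, I apply Lemma \ref{roothered} to the family of linear polynomials $x - a$, quantified over non-torsion $a \in F^\times$. The first half of that lemma says that if $a$ is rootless modtor then $x-a$ has good heredity; quantifying over $a$ gives ``$F$ rootless modtor'' $\Rightarrow$ ``all linear polynomials in $F$ have good heredity''. The second half gives the converse direction elementwise, so ``all linear polynomials over $F$ have good heredity'' $\Rightarrow$ ``$F$ is rootless''.

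For the middle clause, combined with the first clause it suffices to deduce ``$F$ rootless $\Rightarrow$ $F$ rootless modtor'' under the extra hypothesis. If $F$ contains all roots of unity, Lemma \ref{modtorllama} says elementwise rootless and rootless modtor coincide, so the field-level notions also agree. If $F$ has only finitely many roots of unity, set $m := |\mu(F)|$ and argue the contrapositive: pick a non-torsion $a \in F^\times$ that is not rootless modtor. By Remark \ref{rootsrk} the subgroup $R(\bar a, F) \leq \mathbb{Q}$ is not cyclic, so there are infinitely many $n \in \mathbb{N}$ for which some $b_n \in F$, $\zeta_n \in \mu(F)$ satisfy $b_n^n = \zeta_n a$. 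Raising to the $m$th power gives $b_n^{nm} = a^m$, since $\zeta_n^m = 1$. If the set $\{b_n\}$ were finite, some value $b$ would satisfy $b^{nm} = a^m$ for two distinct $n < n'$, forcing $b^{(n'-n)m} = 1$, so $b$ -- and hence $a^m$, and hence $a$ -- would be a root of unity, a contradiction. Thus $a^m$ is a non-torsion element of $F$ with infinitely many distinct roots in $F$, so $a^m$ is not rootless and neither is $F$.

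For the third clause, let $P \in F[x]$ have no torsion or zero roots. By Remark \ref{easyrk} it suffices to check good heredity of each monic irreducible factor, so I may assume $P$ is irreducible. In positive characteristic $p$, I write $P(x) = Q(x^{p^k})$ for a separable irreducible $Q$ whose roots are $p^k$th powers of roots of $P$ (hence still non-torsion and nonzero), and Remark \ref{easyrk} reduces good heredity of $P$ to that of $Q$; so I may further assume $P$ is separable. Then for each root $a \in F^{alg}$ of $P$, the field $F(a)$ is a finite extension of $F$, so by hypothesis $F(a)$ is rootless modtor and in particular $a$ is rootless modtor in $F(a)$. Theorem \ref{alicelemma} then gives that $P$ has good heredity over $F$. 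The main obstacle is the finite-roots-of-unity case of the middle clause, since the other two clauses are near-immediate applications of Lemma \ref{roothered} and Theorem \ref{alicelemma}; the rest is routine bookkeeping.
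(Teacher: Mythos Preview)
Your proof is correct and follows the paper's approach: the same three-clause decomposition, and the same key lemmas (Lemma~\ref{roothered}, Lemma~\ref{modtorllama}, Theorem~\ref{alicelemma}) at the same places. Two minor differences are worth noting. In the finitely-many-roots-of-unity case, the paper simply pigeonholes the $\zeta_n$ into the finite set $\mu(F)$ to find a single $\zeta$ with $\zeta a$ not rootless, whereas you raise to the power $m = |\mu(F)|$ to kill the torsion and show $a^m$ is not rootless; both are short and elementary. In the third clause, the paper just says ``immediate consequence of Theorem~\ref{alicelemma}'' without reducing to the separable irreducible case that theorem requires; your explicit reduction (factor into irreducibles, then in characteristic $p$ write $P(x)=Q(x^{p^k})$ with $Q$ separable and invoke Remark~\ref{easyrk}) fills a gap the paper leaves implicit.
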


\begin{proof}
The first statement is an immediate consequence of Lemma \ref{roothered}.

The second statement follows directly from Lemma \ref{modtorllama} for fields with all roots of unity. If $F$ only has finitely many roots of unity and $f \in F$ is not rootless modtor, then infinitely many equations $x^n = \zeta_n f$ have solutions in $F$ - but then, as infinitely many integers $n$ try to fly into finitely many roots of unity $\zeta$, some $\zeta$ must appear infinitely often, making $\zeta f$ not rootless in $F$.

The last statement is an immediate consequence of Theorem \ref{alicelemma}.
\end{proof}

\begin{sheep}\label{fund-cor}
  If the multiplicative group of every finite extension of the field $F$ is free modulo torsion, then $F$ is of good heredity.
\end{sheep}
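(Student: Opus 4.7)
The plan is to reduce this corollary to the third clause of Proposition~\ref{shepherding}, which says that if every finite extension of $F$ is rootless modtor, then $F$ has good heredity. So the whole task reduces to showing a purely group-theoretic fact: if $E^\times/\mu(E)$ is a free abelian group, then $E$ is rootless modtor. Applying this to each finite extension of $F$ finishes the proof.

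For the core reduction step, fix a field $E$ with $E^\times/\mu(E)$ free abelian, pick any non-torsion $f \in E$, and let $\bar f \in E^\times/\mu(E)$ denote its image. By Remark~\ref{rootsrk}, $f$ is rootless modtor in $E$ precisely when the subgroup
$$ R(f,E) = \{q \in \Q \sthat \bar f^q \in E^\times/\mu(E)\}$$
of $(\Q,+)$ is cyclic. Since $E^\times/\mu(E)$ is free abelian, choose a basis and write $\bar f = \sum_i c_i e_i$ as a finite $\Z$-linear combination of basis elements; because $f$ is not a root of unity, $\bar f \neq 0$, so $d := \gcd_i(c_i)$ is a well-defined positive integer. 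In a free abelian group, an element $\bar f$ admits an $n$th root iff $n$ divides each $c_i$, equivalently iff $n \mid d$. Consequently $R(f,E) = \tfrac{1}{d}\Z$, which is cyclic, so $f$ is rootless modtor. Ranging over $f$, the field $E$ is rootless modtor.

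The corollary then follows by applying this observation to every finite extension $E$ of $F$ (each of which is free modtor by hypothesis) and invoking Proposition~\ref{shepherding}. I do not anticipate a real obstacle: the only nontrivial step is the computation $R(f,E) = \tfrac{1}{d}\Z$, which is a routine check in a free abelian group using a basis expansion, and even this is essentially just the statement that in a free abelian group, the maximal divisibility of a nonzero element is captured by the gcd of its coordinates in any basis.
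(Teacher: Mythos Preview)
Your proposal is correct and follows exactly the paper's route: the paper's one-line proof just points to the remark after Definition~\ref{introdef} (free modtor $\Rightarrow$ rootless modtor) and implicitly combines it with the last clause of Proposition~\ref{shepherding}. Your argument merely makes the ``easy to see'' step explicit via the gcd computation in a free abelian group, so there is no substantive difference.
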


\begin{proof} The first sentence after Definition \ref{introdef}.\end{proof}

\begin{prop}\label{rootless-trans} A purely transcendental extension of a rootless (resp. rootless modtor) field is rootless (resp. rootless modtor).
\end{prop}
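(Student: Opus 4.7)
The plan is to reduce both statements to the case $K = F(t)$ of a single transcendental. Any $f$ in a purely transcendental extension $K = F(\{t_i\}_{i \in I})$ lies in a finitely generated subextension $L = F(t_{i_1},\ldots,t_{i_n})$, and $K/L$ is itself purely transcendental. A standard fact (provable in the one-variable case by writing any element of $M(s)$ algebraic over $M$ as $P(s)/Q(s)$ in lowest terms and reading off from its minimal polynomial that $P$ and $Q$ must be constants, then inducting on the number of generators) gives that $L$ is algebraically closed in $K$. In particular $\mu(K) = \mu(F)$, and every $g \in K$ with $g^m = \zeta f$, for $\zeta \in \mu(K) \subseteq L$, is algebraic over $L$ and hence lies in $L$. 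Rootlessness, respectively rootless-modtor, of $f$ in $K$ therefore reduces to the same property for $f$ in $L$; and we may then induct on $n$ via $F(t_1,\ldots,t_n) = F(t_1,\ldots,t_{n-1})(t_n)$ to reduce further to the one-variable case.

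In the one-variable case $K = F(t)$, unique factorization in the UFD $F[t]$ furnishes the direct-sum decomposition $F(t)^\times \cong F^\times \oplus \bigoplus_P \Z$, with one $\Z$-summand for each monic irreducible $P \in F[t]$ recording its multiplicity. For the rootless case, write a non-torsion $a \in F(t)^\times$ as $a = c \prod P_i^{n_i}$. Any $m$-th root $b$ of $a$ in $F(t)$ must be of the form $d \prod P_i^{n_i/m}$ with $d^m = c$, so $m \mid n_i$ for every $i$. If some $n_i \neq 0$, then $m$ divides $\gcd\{n_i : n_i \neq 0\}$, bounding the set of possible $m$; for each such $m$, at most $m$ values of $d \in F$ satisfy $d^m = c$, so the total root count is finite. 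If instead every $n_i = 0$, then $a = c \in F^\times$ is non-torsion, every root of $a$ in $F(t)$ already lies in $F$, and rootlessness of $F$ concludes.

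The rootless-modtor case follows the same pattern inside the torsion-free quotient $F(t)^\times / \mu(F(t)) \cong (\Fmodtor) \oplus \bigoplus_P \Z$. For a non-identity class $\bar a = (\bar c, (n_i))$, a rational $q$ lies in $R(a, F(t))$ precisely when $q n_i \in \Z$ for every $i$ and $\bar c^q \in \Fmodtor$. The first condition cuts out $\tfrac{1}{N}\Z$ where $N = \gcd\{n_i : n_i \neq 0\}$ (or all of $\Q$ if every $n_i$ vanishes); the second cuts out $R(c, F)$, which is cyclic by the rootless-modtor hypothesis on $F$ (or all of $\Q$ if $\bar c = 1$). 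Since $\bar a \neq 1$, at least one of these two subgroups is a proper, hence cyclic, subgroup of $\Q$ containing $\Z$, and the intersection of such a cyclic subgroup with either $\Q$ or another cyclic subgroup of $\Q$ containing $\Z$ is again cyclic, so $R(a, F(t))$ is cyclic as required. The main obstacle is essentially bookkeeping across the case analysis, together with invoking the algebraic-closedness fact used in the reduction; no deeper input is needed.
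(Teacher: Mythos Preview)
Your proof is correct and rests on the same structural fact the paper uses: for a purely transcendental extension $E/F$ one has $E^\times \cong F^\times \times A$ with $A$ free abelian. The paper's proof is a one-line citation of this as Lemma~10.2 in Karpilovsky's \emph{Field Theory}, and leaves the easy deduction (a direct factor that is free abelian cannot spoil rootlessness or rootlessness modtor) to the reader. You instead prove the decomposition from scratch in the one-variable case via unique factorization in $F[t]$, and handle the general case by a reduction through algebraic closedness of finitely generated subextensions. So the route is essentially the same; your version is more self-contained, at the cost of some extra bookkeeping that the citation absorbs.
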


\begin{proof}
Lemma 10.2 on page 503 of \cite{magicbook} states that for any purely transcendental extension $E$ of $F$, there is some free Abelian group $A$ such that $E^\times \simeq F^\times \times A$.\end{proof}

The situation for algebraic extensions is far more complicated, as demonstrated by the existence of examples of the sort presented in \S\ref{rottenroots}. However, good heredity passes to finite extensions, making it a better property than rootlessness.

\begin{thm} \label{gher-finext}
If a field $F$ has good heredity and $E \geq F$ is a finite field extension of $F$, then $E$ also has good heredity. \end{thm}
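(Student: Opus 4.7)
The plan is to reduce good heredity of a polynomial $P \in E[x]$ to good heredity of an associated polynomial over $F$, and then push the resulting factorization back down to $E$. By Remark \ref{easyrk} and the elementary extension that a product of good-heredity polynomials has good heredity (proved by taking $n$ to be a common multiple of their witnesses, since $H(x^k)$ is hereditarily irreducible whenever $H$ is), we may assume $P$ is irreducible over $E$. Choose a root $\alpha \in F^{alg}$ of $P$ and let $R \in F[x]$ be the minimal polynomial of $\alpha$ over $F$; the roots of $R$ are Galois conjugates of $\alpha$, hence neither zero nor roots of unity, so $R$ has good heredity over $F$ by hypothesis, and $P$ divides $R$ in $E[x]$. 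Thus it suffices to prove the following key lemma: \emph{if $S \in F[x]$ is hereditarily irreducible over $F$, then $S$ has good heredity over $E$.} Granted the lemma, applying it to each hereditarily irreducible factor of a witness $R(x^N)$ to the good heredity of $R$ over $F$ yields good heredity of $R$ over $E$, which transfers down to $P$ via $P \mid R$ and unique factorization in $E[x]$.

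To prove the key lemma, let $r_n$ denote the number of distinct irreducible factors of $S(x^{n!})$ in $E[x]$. The first observation is that $r_n \leq [E:F]$: the polynomial $S(x^{n!})$ is irreducible over $F$ by hereditary irreducibility of $S$, so $\operatorname{Gal}(F^{alg}/F)$ acts transitively on its roots; the irreducible factors over $E$ correspond to $\operatorname{Gal}(F^{alg}/E)$-orbits on these roots, and a subgroup of index $[E:F]$ can split a transitive $G$-set into at most $[E:F]$ sub-orbits. The second observation is that $r_n$ is non-decreasing: writing $S(x^{n!}) = \prod_i T_{n,i}$ for the factorization into distinct $E$-irreducibles, we have $S(x^{(n+1)!}) = \prod_i T_{n,i}(x^{n+1})$, and each factor on the right contributes at least one irreducible piece. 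Consequently $r_n$ stabilizes to some value $r$ for all $n \geq N$.

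For $n \geq N$, the equality $r_{n+1} = r_n$ forces each $T_{n,i}(x^{n+1})$ to be itself irreducible over $E$. Iterating, $T_{N,i}(x^{(N+k)!/N!})$ is irreducible in $E[x]$ for every $k \geq 0$. Given arbitrary $m \geq 1$, choose $k \geq m$ so that $m \mid (N+k)!/N!$ (possible since any $m$ consecutive integers contain a multiple of $m$), and set $j := (N+k)!/(m \cdot N!)$. A hypothetical nontrivial factorization $T_{N,i}(x^m) = A(x) B(x)$ in $E[x]$ would yield $T_{N,i}(x^{(N+k)!/N!}) = A(x^j) B(x^j)$, contradicting irreducibility. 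Hence every $T_{N,i}$ is hereditarily irreducible over $E$, and $S(x^{N!}) = \prod_i T_{N,i}$ witnesses good heredity of $S$ over $E$.

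The main subtlety I anticipate is separability: the factor-counting tacitly uses that $S(x^{n!})$ has no repeated irreducible factors over $E$, which is automatic in characteristic zero but demands care in positive characteristic once the substitution exponent becomes divisible by the characteristic and Frobenius enters the picture; restricting to separable $S$ (as in Theorem \ref{alicelemma}) handles this case. I was initially tempted to prove the stronger statement that an irreducible $E$-factor of a hereditarily irreducible $S \in F[x]$ is itself hereditarily irreducible over $E$, but the example $S = x - 2$ over $F = \Q$ with $E = \Q(\sqrt{2})$ (where $x^2 - 2 = (x - \sqrt{2})(x + \sqrt{2})$ already splits) shows this is false; the stabilization argument above extracts the weaker, but sufficient, good heredity.
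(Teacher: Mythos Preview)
Your argument is correct in characteristic zero and takes a genuinely different route from the paper. The paper argues by contrapositive: it first reduces to $E/F$ Galois (using that good heredity passes to subfields), assumes some irreducible $\tilde P\in E[x]$ fails good heredity with an infinite chain $\tilde Q_i$, and pushes this down to $F$ by forming the Galois norms $P:=\prod_{\sigma\in\operatorname{Gal}(E/F)}\sigma(\tilde P)$ and $Q_i:=\prod_\sigma\sigma(\tilde Q_i)$, obtaining a polynomial over $F$ whose heredity fails. Your approach instead isolates a clean intermediate statement---your key lemma that a polynomial hereditarily irreducible over $F$ has good heredity over $E$---and proves it directly by a stabilization argument bounding the number of $E$-irreducible factors of $S(x^{n!})$ by $[E:F]$. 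Your version is more constructive, gives an explicit bound on how much splitting can occur, and avoids the reduction to the Galois case; the paper's version is shorter but leaves the passage from the non-irreducible $Q_i$ back to an actual infinite branch of $T(P,F)$ somewhat implicit.

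There is one genuine loose end in positive characteristic, and your proposed fix (``restrict to separable $S$'') does not address it: even when $S$ is separable, $S(x^{n!})$ is inseparable as soon as $n\ge p$, and over an inseparable extension an $F$-irreducible polynomial can acquire repeated factors (e.g.\ $x^p-t$ over $\mathbb F_p(t)$ becomes $(x-t^{1/p})^p$ over $\mathbb F_p(t^{1/p})$). In that situation $r_{n+1}=r_n$ only forces each $T_{n,i}(x^{n+1})$ to be a \emph{power} of an irreducible, not irreducible itself. The clean repair is to count irreducible factors \emph{with multiplicity}: let $m_n$ be that count. Your orbit bound upgrades to a pure degree argument---any irreducible $E$-factor $T$ of $S(x^{n!})$ with root $\beta$ satisfies $\deg T=[E(\beta):E]\ge[F(\beta):F]/[E:F]=\deg S\cdot n!/[E:F]$, whence $m_n\le[E:F]$---and monotonicity of $m_n$ is immediate. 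Now $m_{n+1}=m_n$ genuinely forces each $T_{n,i}(x^{n+1})$ to be irreducible, and the rest of your argument goes through in every characteristic.
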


\begin{proof}
Since good heredity of polynomials passes to subfields (Remark \ref{easyrk}), it suffices to consider the case where $E/F$ is Galois. Let $\tilde{P}(x) \in E[x]$ be an irreducible polynomial whose zeros are not roots of unity; we need to show that $\tilde{P}(x)$ has good heredity over $E$.

Suppose that $\tilde{P}$ does not have good heredity over $E$; then, as in the proof of Lemma \ref{roothered}, there are \begin{enumerate}
\item integers $1= n_0 < n_1 < n_2 < n_3 < \ldots$ and
\item irreducible polynomials $\tilde{Q}_i(x) \in F[x]$ of degree $k_i$, such that:
\item all $n_i$ are factorials, and $n_i$ divides $n_{i+1}$ for each $i$;
\item $\tilde{Q}_i(x)$ divides $\tilde{P}(x^{n_i})$; and
\item $\tilde{Q}_{i+1}(x)$ properly divides $\tilde{Q}_i(x^{\frac{n_{i+1}}{n_i}})$ for each $i$.
\end{enumerate}
Let $G$ be the Galois group of $E$ over $F$, and let
 $$P(x) := \prod_{\sigma \in G} \sigma(\tilde{P}) \in F[x]\mbox{, and}$$
 $$Q_i(x) := \prod_{\sigma \in G} \sigma(\tilde{Q}_i) \in F[x].$$
Now the same integers $n_i$ together with polynomials $Q_i$ witness that $P$ does not have good heredity over $F$.
Since any zero $b \in F^{alg}$ of $P$ is a Galois conjugate of some zero $\tilde{b}$ of $\tilde{P}$, none of the zeros of $P$ are roots of unity. A few more steps as in Remark \ref{easyrk} obtain irreducible $Q_i$ finishing the proof.
\end{proof}

\section{A model-theorist's musings.} \label{logicsec}

Here the first author collects a few observation on the logical complexity of rootlessness and heredity; they are mostly irrelevant to the rest of the paper.

For a subset $S \subset \mathbb{N}$, let $rooty_S(x)$ be the type
$$rooty_S(x) := \{ \exists y\, y^\ell = x \sthat \ell \in S \} \cup \{ x^\ell \neq 1 \sthat \ell \in \mathbb{N}^+ \}.$$
An element $a$ of a field $F$ is rootless if and only if it does not realize $rooty_S$ for any infinite $S$. In particular, being rootless is an $L_{\kappa, \omega}$-universal property (for $\kappa = 2^\omega$), so it passes to subfields of $F$ containing $a$. Similarly, A field $F$ is rootless if and only if it omits the types $rooty_S$ for all infinite $S$; so this property passes to all subfields of $F$.

Similarly, failure to be rootless modtor is witnessed by realizing the type
$$\{ \exists y\, \exists z\, y^m = z x \mbox{ and } z^n = 1  \sthat (m,n) \in S \} \cup \{ x^\ell \neq 1 \sthat \ell \in \mathbb{N}^+ \}$$
For some function $S \subset \mathbb{N} \times \mathbb{N}$ with infinite domain.

It is clear, but cumbersome to write down, that hereditary irreducibility and good heredity are also negations of large disjunctions of first-order existential types, both for individual polynomials and for entire fields; they again pass to subfields.

All these properties are very far from being first order, as witnessed by the following proposition.


\begin{prop}
Any infinite field has an elementary extension that is not rootless and, therefore, is not of good heredity.
\end{prop}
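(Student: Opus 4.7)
The plan is a routine compactness argument: I will produce an elementary extension $F^* \succ F$ containing an element $c$ that is not a root of unity but that admits an $n$-th root in $F^*$ for every $n \in \N$. Such a $c$ is manifestly not rootless in $F^*$, so $F^*$ is not a rootless field; and by Proposition \ref{shepherding} (good heredity of $F^*$ would force every non-root-of-unity linear polynomial to have good heredity, which would force $F^*$ to be rootless), $F^*$ therefore cannot have good heredity either.

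Let $L_F$ be the language of rings expanded by a constant symbol for every element of $F$, and adjoin a single further constant symbol $c$. Consider the $L_F \cup \{c\}$-theory
\[
T \; := \; \mathrm{ElDiag}(F) \; \cup \; \{\, c^n \neq 1 \, : \, n \in \N^+\,\} \; \cup \; \{\, \exists y\, y^n = c \, : \, n \in \N^+\,\}.
\]
Any model of $T$ is an elementary extension of $F$, and the interpretation of $c$ in such a model witnesses what we want.

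By compactness it suffices to realize every finite fragment of $T$ inside $F$ itself by choosing $c \in F$ appropriately. Such a fragment only mentions finitely many axioms from the two schemata, so it refers to two finite sets $S_1, S_2 \subset \N^+$, of exponents in the inequality schema and the root-existence schema respectively. Let $M$ be a common multiple of all integers in $S_2$ and let $K := M \cdot \max(S_1)$. In any field the set of elements satisfying $z^k = 1$ for some $k \leq K$ is finite (it is contained in the union of the zero sets of the polynomials $x^k - 1$ for $k \leq K$), so since $F$ is infinite we can pick $y \in F$ with $y^k \neq 1$ for every $k \leq K$. Then $c := y^M$ satisfies $c^n = y^{Mn} \neq 1$ for every $n \in S_1$, and $c = (y^{M/n})^n$ for every $n \in S_2$, so $F$ itself (with this interpretation of $c$) satisfies the chosen fragment.

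Compactness now produces a model $F^*$ of $T$, which is an elementary extension of $F$; the interpretation of $c$ in $F^*$ is not a root of unity yet possesses an $n$-th root in $F^*$ for every $n \geq 1$, so $c$ is not rootless in $F^*$. Thus $F^*$ fails to be rootless, and a fortiori fails to have good heredity. The only substantive input beyond routine first-order logic is the elementary finiteness fact about bounded-order roots of unity used in the preceding paragraph; I do not anticipate any serious obstacle.
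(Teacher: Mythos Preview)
Your proof is correct and follows essentially the same compactness argument as the paper: exhibit the type asserting that $c$ is not a root of unity but has $n$-th roots for all $n$, and verify finite satisfiability using an element of the form $y^M$ (the paper uses $a^{N!}$). The only minor difference is that you realize each finite fragment directly inside $F$, by noting that there are only finitely many $z\in F$ with $z^k=1$ for some $k\leq K$; the paper instead first passes to an elementary extension $F^+$ containing a genuine non-root-of-unity $a$ (needed to handle cases like $F=\mathbb{F}_p^{alg}$) and then uses $a^{N!}$. Your version is slightly more self-contained, but the two arguments are the same in substance.
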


\begin{proof}
Consider the type
 $$p(x) := \{ \exists y\, y^\ell = x \sthat \ell \in \mathbb{N}^+ \} \cup \{ x^\ell \neq 1 \sthat \ell \in \mathbb{N}^+ \}.$$
It suffices to produce an elementary extension of the original field $F$ with a realization of $p$. That is, it suffices to show that $p$ is consistent with the theory of $F$. That is, it suffices to show that any finite subset of $p$ is realized in $F$, or in some elementary extension of $F$.
 Any finite subset of $p$ is contained in
 $$p_N(x) := \{ \exists y\, y^\ell = x \sthat 1 \leq \ell \leq N \} \cup \{ x^\ell \neq 1 \sthat 1 \leq \ell \leq N \}$$
for some $N$.
 Since $F$ is infinite, it has an elementary extension $F^+$ containing an element $a$ that is not a root of unity.
 Now $a^{N!}$ realizes $p_N$.
\end{proof}

\section{Fields of good heredity: Examples} \label{pigsfly}

Here we use other people's work to find fields with good heredity, mostly by finding fields all of whose finite extensions are rootless modtor. We start with some easy observations we could've made long ago.

\begin{prop} \label{goodfields}
\begin{enumerate}
\item The field $\bbf_p^{alg}$ is of good heredity. These fields are the only algebraic closed fields of good heredity.
\item  All global fields in any characteristic are rootless, rootless modtor, and of good heredity.

\item\label{hered-trans} A finitely generated extension of a field of good heredity is of good heredity.   \end{enumerate} \end{prop}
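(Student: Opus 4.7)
For part (1), I would dispatch $\bbf_p^{alg}$ by observing that every nonzero element is a root of unity, so the hypothesis in the definition of good heredity is vacuous. Any other algebraically closed field contains a non-root-of-unity element and, being algebraically closed, has divisible multiplicative group, so it fails to be rootless; by Proposition~\ref{shepherding}, it cannot have good heredity.

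For part (2), I plan to show that $K^\times/\mu(K)$ is free abelian for every global field $K$. For a number field this combines Dirichlet's unit theorem with unique factorization of fractional ideals; for a global function field with field of constants $\bbf_q$ the divisor map embeds $K^\times/\bbf_q^\times$ into the free abelian divisor group of the smooth projective model of $K$, and $\mu(K)=\bbf_q^\times$, so the quotient is free as a subgroup of a free abelian group. Freeness immediately gives rootless modtor and hence rootless. Since every finite extension of a global field is again a global field, Corollary~\ref{fund-cor} then yields good heredity.

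For part (3), I would write $E=F(t_1,\dots,t_k)(\alpha_1,\dots,\alpha_m)$ with the $t_i$ algebraically independent and each $\alpha_j$ algebraic over $K:=F(t_1,\dots,t_k)$. By Theorem~\ref{gher-finext} it suffices to prove $K$ has good heredity, and by induction on $k$ it suffices to show that if a field $F'$ has good heredity then so does $F'(t)$. Let $P\in F'(t)[x]$ be irreducible with no root of unity and no zero among its roots; since $P$ is irreducible, its roots in $F'(t)^{alg}$ are either all algebraic over $F'$ or all transcendental over $F'$. In the algebraic case, Gauss's lemma identifies $P$ (up to a unit) with the $F'$-minimal polynomial of any root, so $P\in F'[x]$; good heredity of $P$ over $F'$ then transfers to $F'(t)$ because polynomials in $F'[x]$ have the same irreducible factorization over $F'$ and over $F'(t)$, again by Gauss. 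In the transcendental case, let $a$ be a root and $L:=F'(t,a)$; this is the function field of a curve over its field of constants $F_L$, the divisor map embeds $L^\times/F_L^\times$ into the free abelian divisor group of that curve, and since $a\notin F_L$ the divisor $\operatorname{div}(a)$ is nonzero, so only finitely many $n$ can divide all of its coefficients, forcing $a$ to be rootless modtor in $L$. Theorem~\ref{alicelemma} then yields good heredity of $P$ over $F'(t)$; inseparable $P$ in characteristic $p$ is handled by writing $P(x)=Q(x^{p^k})$ with $Q$ separable and applying Remark~\ref{easyrk} to pass good heredity between $P$ and $Q$. The main obstacle is this transcendental subcase: good heredity of $F'$ plays no direct role there, and the argument leans instead on the classical structure of function fields of curves together with the fact that subgroups of free abelian groups are free.
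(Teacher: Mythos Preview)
For parts (1) and (2) your argument is the paper's, only with more detail: the paper dispatches (1) in a clause (every nonzero element of $\bbf_p^{alg}$ is a root of unity; other algebraically closed fields have divisible multiplicative group) and handles (2) by citing Dirichlet's Unit Theorem for all global fields and then Corollary~\ref{fund-cor}, where you spell out the number-field and function-field cases separately via ideal factorization and the divisor map.

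For part (3), both you and the paper first reduce via Theorem~\ref{gher-finext} to a purely transcendental extension. From there the routes diverge. The paper's entire remaining argument is the sentence ``for a purely transcendental extension $F(S)$ of $F$, a reducibility statement in $F(S)[x]$ gives rise to a reducibility statement in $F[x]$ by specialization.'' You instead fix an irreducible $P\in F'(t)[x]$ and split on whether its roots are algebraic or transcendental over $F'$: in the algebraic case Gauss's lemma forces $P\in F'[x]$ and good heredity over $F'$ lifts back to $F'(t)$ (again by Gauss, factor by factor); in the transcendental case you embed $F'(t,a)^\times$ modulo its field of constants into the free divisor group of the associated curve, read off that each root $a$ is rootless modtor in $F'(t,a)$, and invoke Theorem~\ref{alicelemma}. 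Your argument is longer but each step is explicit, and it puts Theorem~\ref{alicelemma} to real work; the paper's specialization line is slick but, as written, does not address how a \emph{single} specialization value can be chosen to handle the whole infinite family of reducibility statements that witness failure of good heredity. Your observation about the transcendental subcase is correct and worth noting: that branch never uses good heredity of $F'$, so it actually shows that any irreducible $P\in F'(t)[x]$ with roots transcendental over $F'$ has good heredity, over an arbitrary base field $F'$.
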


\begin{proof} The field $\bbf_p^{alg}$ is vacuously of good heredity, as every non-zero element is a root of unity.
The multiplicative group of an algebraically closed field is divisible.  By Dirichlet's Unit Theorem, the multiplicative group of any global field and all of its finite extensions are free modulo torsion. Corollary \ref{fund-cor} implies the statement about global fields.  Since every finitely generated extension of a field is an algebraic extension of a purely transcendental extension, Theorem \ref{gher-finext} implies that in order to prove statement \ref{hered-trans} we may assume that the extension is purely transcendental. But for a purely transcendental extension $F(S)$ of $F$, a reducibility statement in $F(S)[x]$ gives rise to a reducibility statement in $F[x]$ by specialization.
\end{proof}

\begin{sheep} \label{fingensheep}
Any field finitely generated over $\mathbb{Q}$ or over any subfield of $\bbf_p^{alg}$ has good heredity.\end{sheep}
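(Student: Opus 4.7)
The plan is to assemble this corollary directly from three results already established in the paper: Proposition \ref{goodfields}, specifically the statement that global fields have good heredity and the statement \ref{hered-trans} that good heredity is preserved under finitely generated extensions; and Remark \ref{whole-field-ezis}, that good heredity passes to subfields. Since the corollary has two cases (over $\mathbb{Q}$ and over subfields of $\bbf_p^{alg}$), I would handle them in parallel by first identifying a base field of good heredity and then applying the finitely generated extension result.

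For the characteristic zero case, I would observe that $\mathbb{Q}$ is a global field, so Proposition \ref{goodfields} tells us directly that $\mathbb{Q}$ has good heredity. Any field $K$ finitely generated over $\mathbb{Q}$ is by definition a finitely generated extension of $\mathbb{Q}$, so Proposition \ref{goodfields}(\ref{hered-trans}) immediately gives that $K$ has good heredity.

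For the positive characteristic case, I would let $F_0 \subseteq \bbf_p^{alg}$ be the specified subfield. Proposition \ref{goodfields} tells us $\bbf_p^{alg}$ has good heredity (vacuously, since every nonzero element is a root of unity), and then Remark \ref{whole-field-ezis} transfers this property down to the subfield $F_0$. Applying Proposition \ref{goodfields}(\ref{hered-trans}) once more yields good heredity for any finitely generated extension of $F_0$.

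There is essentially no obstacle here: the corollary is a recombination of previously proved statements, and the only thing to check is that the two routes (starting from $\mathbb{Q}$ or from a subfield of $\bbf_p^{alg}$) are each licensed by Proposition \ref{goodfields}. The only mildly delicate point is remembering that the passage to a subfield of $\bbf_p^{alg}$ must come before the finitely generated extension step, since the latter requires the base to have good heredity to begin with.
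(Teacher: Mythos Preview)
Your proposal is correct and essentially matches the paper's argument. The only cosmetic difference is that the paper, after establishing good heredity for $\mathbb{Q}$ and for subfields of $\bbf_p^{alg}$, phrases the extension step as an induction on the number of generators (using Theorem~\ref{gher-finext} for algebraic generators and Proposition~\ref{goodfields} for transcendental ones), whereas you invoke Proposition~\ref{goodfields}(\ref{hered-trans}) in one shot; since that part is already proved, your packaging is at least as clean.
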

\begin{proof}
By Proposition \ref{goodfields}, $\mathbb{Q}$ and subfields of $\bbf_p^{alg}$ have good heredity.
Now induct on the number of generators, using Lemma \ref{gher-finext} deals with algebraic extensions and Remark \ref{goodfields} again to deal with transcendental ones. \end{proof}

\begin{sheep} \label{local-hensel}
Local fields are not rootless and, therefore, are not of good heredity.
\end{sheep}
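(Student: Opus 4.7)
The plan is, for each local field $F$, to exhibit an element $a \in F^\times$ that is not a root of unity but for which $x^n - a$ has a zero in $F$ for infinitely many $n$. Such an $a$ witnesses failure of rootlessness of $F$; the second conclusion then follows because, if $F$ had good heredity, the linear polynomial $(x - a)$ (whose only zero $a$ is, by hypothesis, not a root of unity and not zero) would have good heredity over $F$, which by Lemma \ref{roothered} would force $a$ to be rootless --- a contradiction.

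For the archimedean local fields $\R$ and $\C$, the element $a = 2$ works on the nose: it is not a root of unity, and $\sqrt[n]{2} \in \R \subseteq F$ for every $n$.

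For a non-archimedean local field $F$ with ring of integers $\oo$, maximal ideal $\mathfrak{m}$, uniformizer $\pi$, and residue characteristic $p$, I plan to take $a := 1 + \pi^N$ for a suitably large $N$ and apply Hensel's Lemma. For any $n$ coprime to $p$, the polynomial $f(x) = x^n - a$ satisfies $v(f(1)) = N > 0 = 2 \, v(f'(1))$, since $f'(1) = n$ is a unit; so Hensel's Lemma produces a zero of $f$ lying in $1 + \mathfrak{m}^N \subseteq F$. As $n$ ranges over the infinitely many positive integers coprime to $p$, this furnishes $n$-th roots of $a$ in $F$ for infinitely many $n$.

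The main (mild) obstacle is verifying that $a$ itself is not a root of unity; I plan to handle this by choosing $N$ large enough that the principal unit group $U^{(N)} := 1 + \mathfrak{m}^N$ is torsion-free. In mixed characteristic, taking $N > e/(p-1)$ (with $e$ the absolute ramification index) makes the $p$-adic logarithm a topological group isomorphism of $U^{(N)}$ onto the torsion-free $\Z_p$-module $\mathfrak{m}^N$, so $U^{(N)}$ is torsion-free. In equal characteristic $p$, any $N \geq 1$ works: every root of unity in $F = k((t))$ has valuation zero and reduces to a root of unity in the residue field $k$, whereas $1 + \pi^N \equiv 1 \pmod{\mathfrak{m}}$ but $1 + \pi^N \neq 1$, so it cannot be a nontrivial torsion element. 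Either way, $a$ is non-torsion, completing the proof.
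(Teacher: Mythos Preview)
Your argument is correct and follows essentially the same route as the paper: in the non-archimedean case both pick a principal unit $\alpha\equiv 1\pmod{\mathfrak m}$ that is not a root of unity and use Hensel's Lemma to extract an $n$th root for every $n$ coprime to the residue characteristic, then invoke Lemma~\ref{roothered}. The only difference is that the paper simply \emph{assumes} such a non-torsion $\alpha$ exists, whereas you construct one explicitly as $1+\pi^N$ and justify torsion-freeness of $U^{(N)}$; your treatment of the archimedean case (taking $a=2$) is likewise more concrete than the paper's one-line dismissal.
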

\begin{proof}
Archimedean local fields $\mathbb{R}$ and $\mathbb{C}$ are clearly not of good heredity.  So, let $F$ be a non-archimedean local field of residue characteristic $p$. Let $\oo$ be the ring of integers of $F$, and $\mathfrak{p}$ the prime ideal.  Then let $\alpha \in \oo$ be an integer satisfying
$$
\alpha \equiv 1 \mod \mathfrak{p}.
$$
We also assume that $\alpha$ is not a root of unity. Let $r$ be a natural number not divisible by $p$. Now consider the equation $x^r = \alpha$. This equation reduces to $x^r \equiv 1 \mod \mathfrak{p}$, which is clearly solvable. Furthermore, $(x^r)' = r x^{r-1}$ evaluated at $\alpha$ is congruent to $r$ modulo $\mathfrak{p}$ which
is by assumption non-zero.  Hensel's lemma for non-archimedean local fields shows that the equation $x^r = \alpha$ is solvable in $F$. Now Lemma  \ref{roothered} implies that $F$ is not of good heredity. \end{proof}

There are many results in the literature about fields such that $K^\times/\mu(K)$ is free Abelian. Here are some non-trivial results we use.

\begin{thm}[May, 1980] \label{may80}
Assume $F$ is a field such that for every finite extension of $E$, $E^\times$ is free modulo torsion, e.g. if $F$ is finitely generated.
\begin{enumerate}
\item If $K$ is any field generated over $F$ by algebraic elements whose degree over $F$ are bounded, then $K^\times$ is free modulo torsion.
\item Suppose, additionally, for every finite extension $E$ of $F$, $\mu(E)$ is finite. Then if $K$ is any Abelian extension of $F$, $K^\times$ is free modulo torsion.
\end{enumerate}
\end{thm}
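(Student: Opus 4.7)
My plan is to prove both parts by writing $K$ as a smooth, well-ordered continuous union $\bigcup_{\alpha<\lambda} L_\alpha$ of finite subextensions of $F$. By the hypothesis on $F$, each $L_\alpha^\times/\mu(L_\alpha)$ is free abelian; the natural inclusions $L_\alpha^\times/\mu(L_\alpha) \hookrightarrow L_\beta^\times/\mu(L_\beta)$ for $\alpha<\beta$ are injective (their kernel would lie in $L_\alpha\cap\mu(L_\beta)=\mu(L_\alpha)$), so $K^\times/\mu(K)$ is a continuous union of free abelian groups. The goal is to upgrade this to actual freeness via a structural criterion from the theory of torsion-free abelian groups, such as Pontryagin's criterion in countable rank (every finitely generated subgroup sits inside a finitely generated pure free subgroup) or a singular-compactness style result in higher cardinality.

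The central obstacle is that the filtration is typically \emph{not} pure: $2\in\Q^\times$ is not a square, yet $2=(\sqrt{2})^2$ in $\Q(\sqrt{2})^\times$, so $\Q^\times/\{\pm 1\}$ is not pure in $\Q(\sqrt{2})^\times/\{\pm 1\}$. The bounded-degree hypothesis of part~(1) is needed precisely to control this failure of purity in a uniform way. The hope is that if one takes the purification in $K^\times/\mu(K)$ of a finitely generated subgroup sitting inside some $L_\alpha^\times/\mu(L_\alpha)$, the degree bound forces this purification to remain finitely generated: any new $m$-th roots adjoined live in extensions of degree $\leq m$ of $L_\alpha$, and the bounded-degree generators constrain which such roots actually occur in $K$. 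This uniform control is the input that should activate Pontryagin's criterion and yield freeness of $K^\times/\mu(K)$.

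For part~(2), the extra inputs are Kummer theory together with the finiteness of $\mu(E)$ for each finite $E/F$. Since $K/F$ is abelian, every finite Galois subextension decomposes, after adjoining the finitely many relevant roots of unity, as a tower of Kummer extensions of bounded exponent. This lets one realize $K$ as a union of subfields that, once the appropriate finite group of roots of unity has been tossed in, are each bounded-degree extensions of intermediate fields already covered by part~(1). The finiteness of $\mu$ at each finite stage is what makes ``the finitely many relevant roots of unity'' a legal move; without it, adjoining all needed roots at each stage would not preserve finite-degree steps.

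The hardest step is verifying the relevant freeness criterion on the filtration: bounded-rank transition quotients and continuous chains of free groups do not, by themselves, imply freeness of the union (the Baer--Specker group $\Z^\N$ is the standard cautionary example). The real depth of May's proof lies in identifying the precise compactness/purity conditions that the arithmetic of finite extensions actually supplies, and in the set-theoretic abelian-group assembly that combines these conditions into freeness of $K^\times/\mu(K)$; I would expect to spend most of the effort here in a detailed write-up, and would reach for the machinery of Fuchs's or Eklof--Mekler's treatments of almost free abelian groups to push the argument through at arbitrary cardinality.
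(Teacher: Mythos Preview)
The paper does not prove this theorem; its entire ``proof'' is a citation to May's original 1980 paper and to Theorems 10.12 and 10.21 of Karpilovsky's book \cite{magicbook}. The result is imported as a black box and used downstream.

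Your proposal is therefore not comparable to anything the paper actually does. As an outline of how a proof might go, it is reasonable in spirit: filtering $K$ by finite subextensions, noting freeness of each $L_\alpha^\times/\mu(L_\alpha)$, and trying to assemble via a Pontryagin-type criterion is a natural plan, and you correctly flag the failure of purity as the central obstruction. But what you have written is a research plan, not a proof. The decisive step---that under the bounded-degree hypothesis of part (1) the pure closure in $K^\times/\mu(K)$ of any finitely generated subgroup remains finitely generated---is stated as a ``hope'' and never established; your supporting remark that ``bounded-degree generators constrain which such roots actually occur in $K$'' is not self-evident, since a field generated by elements of degree $\leq d$ can contain elements of arbitrarily high degree (e.g.\ $\Q(\sqrt{2},\sqrt{3},\sqrt{5},\ldots)$). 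For part (2) you gesture at Kummer theory and the finiteness of $\mu(E)$ but do not show how these feed into the freeness criterion you intend to invoke. You are also candid that the uncountable case would require heavier machinery (Eklof--Mekler, singular compactness) whose hypotheses you have not verified. If you want the actual argument, consult May or Karpilovsky directly; the proofs there engage the arithmetic of units more concretely than the abstract abelian-group route you sketch.
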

\begin{proof}
\cite{may-1980}, or \cite{magicbook}, Theorems 10.12 and 10.21.
\end{proof}
\begin{sheep} \label{maysheep}
The maximal Abelian extension of $\Q$ and all of its finite extensions are rootless modtor. The same statement holds for the maximal Abelian extension of any number field. These fields are all of good heredity.
\end{sheep}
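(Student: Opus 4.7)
The plan is to reduce the corollary to Theorem \ref{may80}(2) and Corollary \ref{fund-cor}. Fix a number field $F$ and set $K := F^{ab}$; I will in fact show that $L^\times/\mu(L)$ is free abelian for every finite extension $L$ of $K$. Freeness implies rootless modtor by the first sentence after Definition \ref{introdef}. Once this is known for every such $L$, Corollary \ref{fund-cor} delivers good heredity for $K$, and Theorem \ref{gher-finext} then propagates good heredity to every finite extension of $K$.

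First I would verify that every number field $F$ satisfies the hypotheses of Theorem \ref{may80}(2). Finiteness of $\mu(E)$ for every finite $E/F$ is immediate since $E$ embeds in $\C$. For freeness of $E^\times/\mu(E)$, Dirichlet's unit theorem gives $\oo_E^\times/\mu(E) \cong \Z^r$, while $E^\times/\oo_E^\times$ embeds in the free abelian group on prime ideals of $\oo_E$ (via the principal-fractional-ideal map) and is therefore itself free abelian; the short exact sequence
$$1 \to \oo_E^\times/\mu(E) \to E^\times/\mu(E) \to E^\times/\oo_E^\times \to 1$$
splits because the quotient is free, and so $E^\times/\mu(E)$ is free. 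Theorem \ref{may80}(2) then yields $K^\times/\mu(K)$ free.

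The crux is upgrading this to every finite extension $L$ of $K$, and the key idea is to reapply Theorem \ref{may80}(2) with a different base. In characteristic zero $L = K(\theta)$ for some $\theta$ algebraic over $F$; setting $F' := F(\theta)$, a number field, one has $L = K \cdot F'$. Standard Galois theory supplies an isomorphism $\mathrm{Gal}(L/F') \cong \mathrm{Gal}(K/K \cap F')$, which exhibits $\mathrm{Gal}(L/F')$ as a subgroup of the abelian group $\mathrm{Gal}(K/F)$, so $L/F'$ is itself abelian. Since $F'$ is again a number field, the hypotheses of Theorem \ref{may80}(2) hold at the base $F'$, and applying it to the abelian extension $L/F'$ gives $L^\times/\mu(L)$ free. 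The only even mildly substantive step is this Galois-theoretic rearrangement $L = K F'$ with $L/F'$ abelian; the rest is bookkeeping with ingredients already assembled in the excerpt.
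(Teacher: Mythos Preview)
Your proof is correct and is precisely the argument the paper leaves implicit (the paper states Corollary~\ref{maysheep} without proof, as an immediate consequence of Theorem~\ref{may80}). The one nontrivial step---that any finite extension $L$ of $K=F^{ab}$ can be rewritten as an abelian extension of a number field $F'$ via $L=KF'$ and the restriction isomorphism $\mathrm{Gal}(KF'/F')\cong\mathrm{Gal}(K/K\cap F')\leq\mathrm{Gal}(K/F)$---is exactly what is needed to reapply Theorem~\ref{may80}(2), and you handle it cleanly. A minor efficiency: once you have shown that every finite extension $L$ of $K$ has $L^\times/\mu(L)$ free, Corollary~\ref{fund-cor} already gives good heredity for each such $L$ directly (every finite extension of $L$ is a finite extension of $K$), so the appeal to Theorem~\ref{gher-finext} is redundant, though harmless.
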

\begin{sheep}
For every natural number $k$, let $\Q_k$ be the field obtained by adding the roots of every polynomial with rational coefficients of degree less than or equal to $k$. Then $\Q_k$ and all of its finite extensions are rootless.  The same statement holds for the field obtained by adjoining the $k$-th roots of any set of prime numbers to $\Q$. These fields are all of good heredity.
\end{sheep}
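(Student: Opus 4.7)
My plan is to imitate the proof of Corollary~\ref{maysheep} but apply part~(1) of Theorem~\ref{may80} (the bounded-degree version) in place of part~(2). I take $F=\mathbb{Q}$, which is finitely generated, so by Dirichlet's unit theorem (as already invoked in Proposition~\ref{goodfields}) every finite extension of $\mathbb{Q}$ has multiplicative group free modulo torsion; this is exactly the hypothesis Theorem~\ref{may80} requires.

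First, I would observe that each of the two fields in the statement is generated over $\mathbb{Q}$ by algebraic elements of $\mathbb{Q}$-degree at most $k$: every root of a rational polynomial of degree $\leq k$ has $\mathbb{Q}$-degree $\leq k$, and each $\sqrt[k]{p}$ has $\mathbb{Q}$-degree exactly $k$. So Theorem~\ref{may80}(1) applies directly and shows that the multiplicative group of either field is free modulo torsion.

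The main step is to promote this to every finite extension $E$ of such a field $K$. I would write $E = K(\beta_1,\ldots,\beta_r)$ with $r$ finite. Because $K$ is algebraic over $\mathbb{Q}$ and $E/K$ is finite, $E$ is algebraic over $\mathbb{Q}$, so each $\beta_j$ has some finite $\mathbb{Q}$-degree $d_j$. Combining the chosen generators of $K$ (of degree $\leq k$) with $\beta_1,\ldots,\beta_r$ (of degree $\leq \max_j d_j$) exhibits $E$ as generated over $\mathbb{Q}$ by algebraic elements of degree at most $\max(k,d_1,\ldots,d_r)$, and a second application of Theorem~\ref{may80}(1) yields that $E^\times$ is free modulo torsion.

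Having this for every finite extension of $K$, I obtain rootlessness (indeed rootlessness modtor) from the implications ``free modulo torsion $\Rightarrow$ rootless modtor $\Rightarrow$ rootless''. Good heredity of $K$ then follows from Corollary~\ref{fund-cor} (equivalently, the last clause of Proposition~\ref{shepherding}), and Theorem~\ref{gher-finext} extends good heredity to every finite extension of $K$. The only place where something could plausibly go wrong is the bounded-degree step for finite extensions, but this is immediate once one notices that only finitely many new generators are adjoined, each already algebraic of some finite degree over $\mathbb{Q}$, so taking a maximum gives the required uniform bound.
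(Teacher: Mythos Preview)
Your argument is correct and is precisely the approach the paper intends: this corollary is stated without proof because it follows from Theorem~\ref{may80}(1) exactly as Corollary~\ref{maysheep} follows from Theorem~\ref{may80}(2), and your handling of finite extensions by adjoining finitely many additional generators of bounded $\mathbb{Q}$-degree is the right way to make the bounded-degree hypothesis persist. You even prove a bit more than stated (rootless modtor, not just rootless), which is consistent with the parallel Corollary~\ref{maysheep}.
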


\section{Non-examples}

\subsection{Free modtor doesn't climb.} \label{freemodtor-zample} We note that Warren May has constructed an algebraic extension $F$ of $\Q$ and a quadratic extension $K/F$ with the following properties:
\begin{itemize}
\item $F^*$ is free modtor, but $K^*$ is not free modtor;
\item every finite extension of $F$ contains only finitely many roots of unity.
\end{itemize}
The construction is as follows: Let $\alpha_0 = (2+i)(2-i)^{-1}$, and define a sequence $\alpha_n$ of complex numbers, $n \geq 1$,  by
$$
\alpha_n^4 = \alpha_{n-1}.
$$
Put $K_0 = \Q(i)$, and $K_n = K_0(\alpha_n)$. We then have $K_0 \subset K_1 \subset K_2 \subset \cdots$. We let
$$
K = \bigcup_{n=0}^\infty K_n.
$$
Complex conjugation stabilizes each $K_n$. We let $F = K \cap \R$. Proving that $K$ and $F$ have the desired properties is hard. For the proof, see Theorem 10.18, page 510 of \cite{magicbook}, or the paper \cite{may-1972}.

\subsection{Rootless modtor, but not free modtor}\label{modtormodtor}

One may be tempted to conjecture that fields that are rootless modtor have multiplicative groups that are free modtor. Here we observe that this is not true.
First we recall a result due to Fuchs and Loonstra \footnote{We learned about this result from a post by Andreas Blass on {\tt mathoverflow} on March 8, 2012. We hereby acknowledge this.}:

\begin{thm}[\cite{Fuchs-Loonstra}, Lemma 2]
There exists a torsion free abelian group $G$ of rank $2$ such that every rank $1$ subgroup is cyclic, and every rank $1$ torsion free factor group is divisible.
\end{thm}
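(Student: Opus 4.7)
My plan is to construct $G$ inside $\Q^2$ by $p$-adic congruences: for each prime $p$, choose a $p$-adically irrational integer $\alpha_p \in \mathbb{Z}_p \setminus \Q$, and set
\[
G := \{(x,y) \in \Q^2 : y - \alpha_p x \in \mathbb{Z}_p \text{ for every prime } p\}.
\]
The $\alpha_p$'s will be chosen by a diagonal procedure: enumerate $\Q = \{r_1, r_2, \ldots\}$, and for the $n$th prime $p_n$ pick $\alpha_{p_n}$ whose reduction modulo $p_n$ avoids $\{r_1,\ldots,r_n\}$ (possible since $p_n > n$), extending to an irrational element of $\mathbb{Z}_{p_n}$ by freely selecting higher $p_n$-adic digits (only countably many choices give rationals). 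Then $G \supset \Z^2$, is torsion-free, and has rank $2$.

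To verify that every rank $1$ subgroup of $G$ is cyclic, I would note that every rank $1$ pure subgroup has the form $H_w := \Q w \cap G$ for some primitive $w = (a,b) \in \Z^2$. A direct calculation shows $tw \in G$ iff the denominator of $t \in \Q$ divides $\prod_p p^{k_p(w)}$, where $k_p(w) := v_p(b - \alpha_p a)$. Irrationality of $\alpha_p$ makes $b - \alpha_p a \neq 0$, so $k_p(w) < \infty$ for every $p$; the diagonal choice forces $k_p(w) \geq 1$ (equivalently $\alpha_p \equiv b/a \pmod p$) for only finitely many $p$. So $H_w$ is cyclic, generated by $w/\prod_p p^{k_p(w)}$.

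To verify that every rank $1$ torsion-free quotient is divisible, take $H_w$ as above and use the projection $\pi(x,y) := bx - ay$, whose kernel is $\Q w$, so $G/H_w \cong \pi(G) \subset \Q$. Since $\gcd(a,b) = 1$, $\pi(G) \supset \Z$, and it remains to show $1/p \in \pi(G)$ for every prime $p$. Writing $b - \alpha_p a = p^{k_p(w)} u$ with $u \in \mathbb{Z}_p^\times$, set $K := k_p(w)+1$ and pick integers $m$ with $m u \equiv 1 \pmod p$ and $n$ with $n \equiv \alpha_p m \pmod{p^K}$. The point $(m/p^K,\, n/p^K)$ lies in $G$ (at primes $q \neq p$ both coordinates are $q$-integral; at $p$ the congruence on $n$ is precisely the defining condition), and a direct computation gives $\pi(m/p^K,\, n/p^K) \equiv 1/p \pmod{\Z}$. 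Hence $\pi(G) = \Q$ and $G/H_w \cong \Q$ is divisible.

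The hard part is the balancing act: the first property demands $G$ be only sparingly divisible along any rational direction, while the second demands enough $p$-divisibility in every transverse direction. Both are simultaneously achievable precisely because the ``$p$-divisible line'' $(1,\alpha_p)$ in $G$ is truly irrational in $\mathbb{Q}_p^2$: it is never parallel to any rational $(a,b)$, so divisibility of $w$ in $G$ is capped at finitely many $p$-adic digits (giving property 1), yet those same finitely many digits can be recombined, after projection, to produce $1/p$ in the transverse quotient (giving property 2).
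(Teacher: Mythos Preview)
The paper does not prove this theorem; it is quoted from Fuchs--Loonstra \cite{Fuchs-Loonstra} and used as a black box. There is therefore no ``paper's proof'' to compare against, and your attempt must stand on its own.

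Your construction is in the classical spirit of building indecomposable rank-$2$ groups from $p$-adic data, and it is essentially correct. The argument that every rank-$1$ subgroup is cyclic is fine: the diagonal choice of first digits guarantees that for each rational slope $b/a = r_N$ the congruence $\alpha_{p_n} \equiv b/a \pmod{p_n}$ fails for all $n > N$, while irrationality of each $\alpha_p$ caps the remaining finitely many exponents $k_p(w)$.

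There is, however, a real gap in the divisibility half. You write ``it remains to show $1/p \in \pi(G)$ for every prime $p$'' and then conclude $\pi(G) = \Q$. That implication is false: the subgroup of $\Q$ generated by $\Z$ together with $\{1/p : p \text{ prime}\}$ is only the rationals with squarefree denominator. You must produce $1/p^{j} \in \pi(G)$ for every $j \geq 1$. Your own computation already delivers this with a one-line adjustment: take $K := k_p(w) + j$ instead of $k_p(w)+1$, choose an integer $m$ with $mu \equiv 1 \pmod{p^{j}}$ and an integer $n$ with $n \equiv \alpha_p m \pmod{p^{K}}$. The point $(m/p^{K}, n/p^{K})$ still lies in $G$ for the same reasons, and
\[
bm - an \equiv m(b - \alpha_p a) = m\,p^{k_p(w)} u \equiv p^{k_p(w)} \pmod{p^{K}},
\]
so $\pi(m/p^{K}, n/p^{K}) \equiv 1/p^{j} \pmod{\Z}$. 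With this correction the argument is complete.
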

In particular this group $G$ is not free.  By Theorem 12.3, page 520 of \cite{magicbook} (Theorem \ref{thm38} below), there is a field $F$ such that
$$
F^\times \simeq G \times {\mathbb{Z}}/ 2{\mathbb{Z}} \times A
$$
with $A$ a free abelian group. It is clear that the torsion subgroup of $F^\times$ is $\Z/2\Z$. Since $F^\times / t(F^\times)$ is isomorphic to $G \times A$, it is clear that $F^\times$ is rootless modtor, and certainly not free modtor.

\subsection{Extensions of rootless (modtor) are not necessarily rootless (modtor)}\label{rottenroots}
In this subsection we construct an example of a finite extension of a rootless (modtor) field which is not rootless (modtor).

\

Let
$$
F_n := \mathbb{Q}(x_n)
$$
with
$$
x_n := \cos(\frac{\alpha}{2^n})
$$
for a real number $\alpha$.  Since there are only countably many reals $\alpha$ such that $\alpha / \pi$ rational and/or $\cos \alpha$ is algebraic, we may and do fix some $\alpha$ such that $\alpha / \pi$ is irrational and $\cos \alpha$ is transcendental. As an abstract field, each $F_n$ is a purely transcendental extension $\mathbb{Q}(y)$ of $\mathbb{Q}$ of transcendence degree $1$.

Let $T(x) := 2x^2 -1$, so that $\cos (2 \theta) = T( \cos \theta)$.
Now $T(x_{n+1}) = x_n$, so the fields $F_n$ form an increasing chain $F_1 \leq F_2 \leq \ldots \leq F_n \leq \ldots$.
As extensions of abstract fields, all pairs $F_{n+1} / F_n$ are isomorphic to $\mathbb{Q}(y) / \mathbb{Q}(T(y))$.
Any two tails $F_\ell \leq F_{\ell+1} \leq \ldots$ and $F_m \leq F_{m+1} \leq \ldots$
of our chain are isomorphic as chains of abstract fields. We finally set
$$F := \bigcup_n F_n.$$

\textit{Claim 1:} The field $F$ is rootless.
\begin{proof}[Proof of Claim 1.] Suppose towards contradiction that $f \in F$ is not a root of unity and has infinitely deep roots in $F$. We may assume without loss of generality that $f \in F_1$, since all tails of the chain are the same.
Since all algebraic elements of $F$ are rational, $f \not\in \mathbb{Q}$. Thus, $f(x_1)$ is a rational function over $\mathbb{Q}$. Since $F_1 = \mathbb{Q}(x_1)$ is rootless, we may and do assume without loss of generality that $f$ has no roots in $F_1$. Since the degree of the field extension $F_n/F_1$ is $2^n$, the infinite chain of roots of $f$ must consist of an infinite chain of square roots. That is, for each $m$ there exists some $n$ and some rational function $g_m$ over $\mathbb{Q}$ such that $(g_m(x_n))^{2^m} = f(x_1)$ in $F_n$. That is,
$$(g_m(y))^{2^m} = f(T^{\circ n} (y)$$
as rational functions in $y$ over $\mathbb{Q}$. Here, $T^{\circ m}$ is the $m$th compositional power of $T$.
All zeros of $(g_m(y))^{2^m}$ on $\mathbb{P}^1$ have multiplicity $r_m 2^m$ for some integer $r_m$. We obtain our contradiction by getting an upper bound on the multiplicity of any root $a$ of $f(T^{\circ n} (y))$ independent of $n$. Indeed, $f$ can only contribute its degree towards the multiplicity of $a$, so we only need an upper bound on the multiplicity of a root $a$ of $T^{\circ n}(y) - T^{\circ n}(a)$. An upper bound on the multiplicities of roots of its derivative suffices. Note that $T^{\circ n}(0) \neq 0$ for any $n$: indeed, $T(0) = -1$, $T(-1) = 1 = T(1)$.
Thus the derivative
$$(T^{\circ n})'(y) = 2 T^{\circ n-1}(y) ( (T^{\circ n-1})'(y)) =
2^n T^{\circ n-1}(y) T^{\circ n-2}(y) \ldots T^{\circ 2}(y) T(y) y$$
has roots of multiplicity at most two, as no two $T^{k}(y)$ share roots because $T^{\circ n}(0) \neq 0$ for any $n$.

To summarize: for any $n$ and any $a$, the multiplicity of roots of $T^{\circ n}(y) - T^{\circ n}(a)$ is at most $3$ because the multiplicity of roots of its derivative is at most $2$. Therefore, for any $n$, the roots of $f(T^{\circ n} (y))$ have multiplicity at most $3 \deg(f)$.
Therefore, for any $m \geq 3 \deg(f)$, there is no rational function over $\mathbb{Q}$ with $(g_m(y))^{2^m} = f(T^{\circ n} (y))$.
That is, there is no $2^m$th root of $f(x_1)$ in $F_n$ for any $n$, which means that $f(x_1)$ has no $2^m$th root anywhere in $F$. \end{proof}

\textit{Claim 1.1:} The field $F$ is rootless modtor.
\begin{proof}[Proof of Claim 1.] Since $F \leq \mathbb{R}$, the only roots of unity in $F$ are $\pm 1$, so the second statement in Corollary \ref{shepherding} makes $F$ rootless modtor.
\end{proof}

\textit{Claim 2:} The quadratic extension $E := F(i\sin(\alpha))$ of the field $F$ is not rootless.
\begin{proof}[Proof of Claim 2.]
Since $$\sin (\frac{\theta}{2}) = \frac{\sin \theta}{2 \cos (\theta/2)},$$
inducting on $n$ shows that $i\sin(\frac{\alpha}{2^n}) \in E$ for each $n$.
 Now $E$ contains complex numbers $c_n := \cos(\frac{\alpha}{2^n}) + i \sin(\frac{\alpha}{2^n})$ with the property that $c_{n+1}^2 = c_n$. Since we chose $\alpha/\pi$ to be irrational at the beginning, these $c_n$ are not roots of unity.
 Thus, $E$ is not rootless.
\end{proof}

\textit{Claim 2.1:} A fortiori, the quadratic extension $E := F(i\sin(\alpha))$ of the field $F$ is not rootless modtor.\\

\textit{Claim 3:} The field $F$ does not have good heredity.
\begin{proof}[Proof of Claim 3.]
Let $P(x) := x^2 - 2 (\cos \alpha) x +1$ be the minimal polynomial of $c_0 = \cos(\alpha) + i \sin(\alpha)$ over $F$.
Over $E$, this polynomial factors as $P(x) = (x + c_0)(x + \overline{c_0})$: since $F \subset \mathbb{R}$, the complex conjugate $\overline{c_0}$ of $c_0$ is the other root of $P$.
Over $E$, the polynomial $x- c_0$ doesn't have good heredity, as $x-c_n$ is a linear factor of $x^{2^n} - c_0$.
Thus, $Q_n(x) := (x + c_n)(x + \overline{c_n}) \in F[x]$ are distinct quadratic factors of $P(x^{2^n})$, showing that $P$ does not have good heredity over $F$.
\end{proof}

\subsection{Rootless, but not rootless modtor}\label{rootless-notrootless}
In this subsection we will construct a field which is rootless but not rootless modtor. We note that this is a group theoretic statement about the multiplicative group of the field.  Let us recall a theorem of W. May \cite{may-1972}.  We call an Abelian group $G$ {\em locally cyclic} if every finitely generated subgroup of $G$ is cyclic.
\begin{thm}[May, 1972]\label{thm38}
Let $G$ be an abelian group such that the torsion subgroup is locally cyclic. Then there is a field $L$ and a group $H$ such that $L^* \simeq G \times H$, where $H$ is a free abelian group if $t(G)$ has non-trivial $2$-component and $H$ is the direct product of a free abelian group with a cyclic group of order $2$ if the torsion subgroup of $G$ has trivial $2$-component.
\end{thm}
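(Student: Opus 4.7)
The plan is to follow the structure of May's original construction, building the field $L$ in stages. The necessary condition, that the torsion subgroup of any field's multiplicative group is locally cyclic, is classical (finite subgroups of $L^\times$ are cyclic), so the hypothesis on $t(G)$ is exactly what is needed.

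First I would construct a ``base field'' $K_0$ whose group of roots of unity realizes $t(G)$ precisely, adjusted by the $2$-component parity. Concretely, in characteristic $0$ one can take a suitable subfield of $\overline{\Q}$: adjoin primitive $n$-th roots of unity for each order $n$ appearing in $t(G)$, then pass to a maximal subfield in which no additional roots of unity creep in (this is where the $\pm 1$ dichotomy arises, since $-1$ is unavoidable in characteristic $0$, so when $t(G)$ has no $2$-component the element $-1$ must be accommodated separately, explaining the extra $\Z/2\Z$ factor in $H$).

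Next, I would handle the torsion-free quotient $G/t(G)$. The naive move is to take a purely transcendental extension $K_1 = K_0(\{x_i\})$ over an index set; by the result cited earlier (Lemma 10.2 of \cite{magicbook}), this produces $K_1^\times \simeq K_0^\times \times \Z^{(I)}$, contributing a free summand. The problem is that $G/t(G)$ need not itself be free abelian, so one cannot simply let $I$ be ``a basis of $G/t(G)$''. To match the divisibility structure of $G/t(G)$, I would then extend $K_1$ by a carefully controlled tower of algebraic extensions that adjoin chosen $n$-th roots of selected transcendental generators, mimicking any non-trivial divisibility chains in $G/t(G)$. The extensions must be chosen so that (a) no new roots of unity appear, preserving $\mu(L) = \mu(K_0)$; and (b) the subgroup of $L^\times$ generated by the images of the original transcendentals together with $\mu(K_0)$ is precisely a copy of $G$ inside $L^\times$.

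The main obstacle is step three: arranging the tower so that $G$ embeds as a direct summand rather than merely a subgroup. This is the heart of May's argument and requires an inductive construction in which, at each stage, new independent transcendentals are introduced to absorb the ``remainder'' of each extraction, so that the complement $H$ can be shown to be free abelian (plus the forced $\Z/2\Z$ when needed). Once the direct-summand decomposition is in place, verifying that $H$ has the desired form is a matter of applying standard structure theorems for torsion-free abelian groups built from a union of free pieces under bounded divisibility conditions. The existence assertion about $L$ is thus really a statement about an explicit, though technical, inductive field extension, and the hard combinatorial work lies in the compatibility of the root-extraction step with the requirement that $\mu(L)$ not grow.
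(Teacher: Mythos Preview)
The paper does not give a proof of this theorem at all; it simply cites it as a known result of May (``This is Theorem 12.3, page 520 of \cite{magicbook}''), and uses it as a black box. So there is no argument in the paper to compare your proposal against.

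Your sketch is a plausible outline of May's actual construction in \cite{may-1972}: one indeed starts from a base field realizing the torsion part (and the unavoidable $-1$ in characteristic zero is exactly why the extra $\Z/2\Z$ appears when $t(G)$ has trivial $2$-component), then passes to a purely transcendental extension, and finally performs controlled radical extensions to impose the divisibility relations of $G/t(G)$ while checking that no new roots of unity appear and that the complement stays free. But as you yourself note, the substantive content---showing that $G$ sits as a \emph{direct summand} with free complement after the tower of root extractions---is precisely what you have not carried out; you have only described where the difficulty lies. For the purposes of this paper that is fine, since the authors themselves treat the theorem as quotable; but as a standalone proof your proposal is a roadmap rather than an argument.
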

This is Theorem 12.3, page 520 of \cite{magicbook}.  Since an Abelian group is rootless, respectively rootless modtor, if and only if its direct product with a free Abelian group is rootless, resp. rootless modtor, May's theorem reduces the problem to finding an Abelian group $G$ with locally cyclic torsion subgroup $t(G)$  such that $G$ is rootless, but $G/t(G)$ is not.  Here, however, instead of construct a concrete field with is rootless, but not rootless modtor.

\

The field $K$ we construct has transcendence degree $1$; its algebraic part $A$ is generated by (some but not all) roots of unity. This field $K$ is an infinite radical extension of $A(x)$ for the transcendental $x$; and this $x$ is not rootless modtor in $K$.

\

First, we build some scaffolding.

\

\textbf{choices:} Let $\{ p_i : i \in \N \}$ be an infinite list of distinct primes; and for each $i$, let $\alpha_i$ be a primitive $p_i$th root of unity.

\

\textbf{non-choices, products:} For each $i$, let $n_i := \prod_{j=0}^{i} p_j$, a square-free integer; and let $\zeta_i := \prod_{j=0}^{i} \alpha_j$, a primitive $n_i$th root of unity.\\

\

\textbf{non-choice, roots: } Since $\gcd (p_{i+1}, n_i) =1$, there are $x,y \in \mathbb{Z}$ such that $xp_{i+1} +y n_{i} = 1$. Letting $\beta_{i+1} := \alpha_{i+1}^y$, we get $\beta_{i+1}^{n_i} = \alpha_{i+1}$. Also, let $\beta_0 = \alpha_0$.

\

Let $A := \mathbb{Q}(\{\alpha_i : i \in \N\})$; by Theorem \ref{may80} and Corollary \ref{maysheep} and Remark \ref{whole-field-ezis}, $A$ is rootless, rootless modtor, and good heredity. We note here for later use that in $A$, a primitive $\ell$th root of unity cannot have an $m$th root unless $\gcd(\ell, m) =1$: there are no $p^2$th roots of unity for any prime $p$.

We build an $\omega$-chain of fields $A(x) \leq A_0 \leq A_1 \leq \ldots$ with each $A_i = A(t_i)$ for transcendentals $t_i$. The goal of the construction is to make $x$ not rootless modtor in $K := \cup_i A_i$, while keeping everything rootless.

We embed $A(x)$ into $A(t_0)$ over $A$ by sending $x$ to $\beta_0 t_0^{p_0}$.
We embed $A_i$ into $A_{i+1}$ over $A$ by sending $t_i$ to $\beta_{i+1} t_{i+1}^{p_{i+1}}$.

\

\textbf{Claim 1:} In $K$, $x = \zeta_i t_i^{n_i}$ for all $i$.
\begin{proof}[Proof of Claim 1] The base case for the induction on $i$ is our choice of the embedding of $A(x)$ into $A(t_0)$ and our choice of $\beta_0$. For the induction step,
$$\zeta_i t_i^{n_i} = \zeta_i ( \beta_{i+1} t_{i+1}^{p_{i+1}} )^{n_i} =
(\zeta_i \beta_{i+1}^{n_i} ) t_{i+1}^{ p_{i+1} n_i } =
(\zeta_i \alpha_{i+1} ) t_{i+1}^{n_{i+1}} = \zeta_{i+1} t_{i+1}^{n_{i+1}}.$$

In particular, $x$ is an $n_i$th power modulo roots of unity, for every $i$, so it is not rootless modtor; so the field $K$ is not rootless modtor.
\end{proof}

\textbf{Claim 2:} The field $K$ is rootless.
\begin{proof}[Proof of Claim 2] As with \S \ref{rottenroots}, it suffices to show that elements of $A(x)$ are rootless in $K$. While it is not true that all tails of the chain $A_i$ are isomorphic, the only difference is the particular primes $p_i$ and a few extra roots of unity coprime to everything we care about.
\end{proof}

As in \S\ref{rottenroots}, we take an element $f(x) \in A(x)$ and find a bound on $m$ such that $f(x) \in A_i^m$ independent of $i$.

\

\textit{Case ``constant'':} If $f \in A$ is not a root of unity, then $f$ is rootless in $A$ as we noted above.
Since each $A_i = A(t_i)$ is a purely transcendental extension of $A$, $f$ gains no new roots in any $A_i$.\\

\

\textit{Case ``power'':} Suppose that $f = x^k$ for some nonzero $k \in \mathbb{Z}$.
In $A_i = A(t_i)$ we have from Claim 1 that $x = \zeta_i t_i^{n_i}$, so
 $$f(x) = f(\zeta_i t_i^{n_i}) = \zeta_i^k t_i^{k n_i} =: g(t_i).$$
 Factors of monomials must be monomials, so if $g = h^m$ for some $m$ and some $h \in A_i$, then
 $h(t_i) = b t_i^r$ and $rm = k n_i$ and $b^m = \zeta_i^k$. We are now entirely inside the torsion group of $A$.

 Recall that $\zeta_i$ is a primitive $n_i$th root of unity, so $\eta_i := \zeta_i^k$ is a primitive $\ell_i$th root of unity where $\ell_i := (n_i/\gcd(n_i,k))$. In order for  $\eta_i$ to have an $m$th root, we must have $\gcd(\ell_i, m) =1$. Now we are just solving divisibility-and-gcd relations in integers.

 Let $\gamma := \gcd(k, n_i)$ and $n_i = \gamma \ell_i$ and $k = \gamma k'$ with $\gcd(k', \ell_i) =1$.
 We also know that $m$ divides $kn_i = \gamma^2 k' \ell_i$ and that $\gcd(\ell_i, m) =1$.
 Thus, $m$ must divide $\gamma^2 k'$; so $m$ must divide $k^2$. Actually, $m$ must also be square-free, so it must actually divide $k$. In any case, $m$ is bounded independently of $i$.

\

\textit{Case ``monomial'':} Suppose that $f = ax^k$ for some $a \in A$ and some nonzero $k \in \mathbb{Z}$.

 We first show that we may assume without loss of generality that $a=1$.
In $A_i = A(t_i)$ we have from Claim 1 that $x = \zeta_i t_i^{n_i}$, so
 $$f(x) = f(\zeta_i t_i^{n_i}) = a \zeta_i^k t_i^{k n_i} =: g(t_i).$$
 Factors of monomials must be monomials, so if $g = h^m$ for some $m$ and some $h \in A_i$, then
 $h(t_i) = b t_i^r$ and $rm = k n_i$ and $b^m = a \zeta_i^k$.
 Unless $a$ is a root of unity, the fact that $A$ is rootless modtor gives the desired bound on $m$.
 So suppose that $a^q =1$; now it suffices to show that $f(x)^q = x^{kq}$ is rootless, which was done in the previous Case.

\

\textit{Case ``other'':} Otherwise, some nonzero $\alpha \in A^{alg}$ is a zero or a pole of $f$. Since the rootlessness of $f$ is equivalent to the rootlessness of $1/f$, we may and do assume without loss of generality that $f(\alpha) = 0$. Let $r$ be the multiplicity of $\alpha$ as a zero of $f$.
Now in $A_i = A(t_i)$ we have from Claim 1 that $x = \zeta_i t_i^{n_i}$, so $f(x) = f(\zeta_i t_i^{n_i}) =: g(t_i)$.
So now for each of the $n_i$ distinct $n_i$th roots $\beta$ of $\alpha/\zeta_i$ in $A^{alg}$ is a zero of $g$ with multiplicity exactly $r$. On the other hand, if $f(x) = g(t_i) = (h(t_i))^m$ for some $h \in A_i$, then $m$ divides the multiplicity of all zeros of $g$. Thus, $r$ is an upper bound on $m$, independent of $i$.

\end{document}